\providecommand{\U}[1]{\protect\rule{.1in}{.1in}}
\newtheorem{theorem}{Theorem}
\newtheorem{corollary}[theorem]{Corollary}
\newtheorem{definition}[theorem]{Definition}
\newtheorem{lemma}[theorem]{Lemma}
\newtheorem{problem}[theorem]{Problem}
\newtheorem{remark}[theorem]{Observation}
\newenvironment{proof}[1][Proof]{\noindent\textbf{#1.} }{\ \hfill \rule{0.5em}{0.5em}\bigskip}
\begin{document}

\title{Vertex and edge metric dimensions of unicyclic graphs}
\author{Jelena Sedlar$^{1}$,\\Riste \v Skrekovski$^{2,3}$ \\[0.3cm] {\small $^{1}$ \textit{University of Split, Faculty of civil
engineering, architecture and geodesy, Croatia}}\\[0.1cm] {\small $^{2}$ \textit{University of Ljubljana, FMF, 1000 Ljubljana,
Slovenia }}\\[0.1cm] {\small $^{3}$ \textit{Faculty of Information Studies, 8000 Novo
Mesto, Slovenia }}\\[0.1cm] }
\maketitle

\begin{abstract}
In a graph $G$, the cardinality of the smallest ordered set of vertices that
distinguishes every element of $V(G)$ (resp. $E(G)$) is called the vertex
(resp. edge) metric dimension of $G$. In \cite{SedSkreBounds} it was shown
that both vertex and edge metric dimension of a unicyclic graph $G$ always
take values from just two explicitly given consecutive integers that are
derived from the structure of the graph. A natural problem that arises is to
determine under what conditions these dimensions take each of the two possible
values. In this paper for each of these two metric dimensions we characterize
three graph configurations and prove that it takes the greater of the two
possible values if and only if the graph contains at least one of these
configurations. One of these configurations is the same for both dimensions,
while the other two are specific for each of them. This enables us to
establish the exact value of the metric dimensions for a unicyclic graph and
also to characterize when each of these two dimensions is greater than the
other one.

\end{abstract}

\textit{Keywords:} vertex metric dimension; edge metric dimension; unicyclic
graphs; cactus graphs.
%torus graphs.

\textit{AMS Subject Classification numbers:} 05C12; 05C76

\section{Introduction}

In this paper we consider only simple and connected graphs unless explicitly
stated otherwise. The distance between a pair of vertices $u$ and $v$ in a
graph $G$ is denoted by $d(u,v)$. We say that a vertex $s$ from $G$
\emph{distinguishes} or \emph{resolves} a pair of vertices $u$ and $v$ from
$G$ if $d(s,u)\not =d(s,v).$ A set of vertices $S\subseteq V(G)$ is called a
\emph{vertex metric generator,} if every pair of vertices in $G$ is
distinguished by at least one vertex from $S.$ The cardinality of a smallest
vertex generator in $G$ is called the \emph{vertex metric dimension} of $G,$
and it is denoted by $\mathrm{dim}(G).$ For this variant of metric dimension
the prefix "vertex" is sometimes omitted, so we say only metric generator and
metric dimension.

The concept of metric generator was introduced in \cite{SlaterVertex} under
the name of locating set and the problem of uniquely identifying the location
of an intruder in a network by its distances to the locating devices was
considered. Independently, the same concept was studied in \cite{HararyVertex}
under the name of resolving set. The complexity of approximating the metric
dimension of a graph was studied in \cite{KhullerVertex}, applications of
metric dimension in digital geometry in \cite{MelterVertex}, the comparison of
metric dimension of graphs and line graphs in \cite{KleinVertex}, how the
metric dimension can be affected by the addition of a single vertex in
\cite{BuczkowskiVertex}, vertices contained in all metric generators in
\cite{HakanenYero}, and the behaviour of metric dimension with respect to
various graph operations in \cite{ChartrandVertex, SaputroVertex}.

Recently, the concept of metric dimension was extended from distinguishing
vertices to distinguishing edges. Similarly as above, a vertex $s\in V(G)$
\emph{distinguishes} two edges $e,f\in E(G)$ if $d(s,e)\neq d(s,f)$, where
$d(e,s)=d(uv,s)=\min\{d(u,s),d(v,s)\}$. The authors of \cite{TratnikEdge}
noticed that there are graphs in which none of the smallest metric generators
distinguishes all pairs of edges, so they were motivated to introduce a notion
of an \emph{edge metric generator} as any set $S\subseteq V(G)$ which
distinguishes all pairs of edges, the \emph{edge metric dimension} (denoted by
$\mathrm{edim}(G)$) as the cardinality of a smallest edge metric generator,
and then study its relation with the vertex metric dimension. They presented
families of graphs for which $\mathrm{dim}(G)<\mathrm{edim}(G)$, or
$\mathrm{dim}(G)=\mathrm{edim}(G)$, or $\mathrm{dim}(G)>\mathrm{edim}(G)$,
also they established that determining the edge metric dimension of a graph is NP-hard.

This new variant of metric dimension immediately attracted a lot of interest.
The behaviour of edge metric dimension on several graph operations was studied
in \cite{ZubrilinaEdge}, the edge metric dimension of convex polytopes and its
related graphs in \cite{ZhangGaoEdge}, graphs with the maximum edge metric
dimension in \cite{ZhuEdge}, pattern avoidance in graphs with bounded metric
dimension or edge metric dimension in \cite{GenesonEdge}, an approximation
algorithm for the edge metric dimension problem in
\cite{HuangApproximationEdge}, comparison of metric dimensions in
\cite{KelHypercubes, Knor}, bounds on vertex and edge metric dimension of
graphs with edge disjoint cycles in \cite{SedSkreBounds}.

Recently the mixed metric dimension was also introduced \cite{KelencMixed},
where a mixed metric generator of a graph $G$ is defined as any set $S$ which
distinguishes all pairs from $V(G)\cup E(G),$ the size of a smallest such set
is the \emph{mixed metric dimension} of $G$, and it is denoted by
$\mathrm{mdim}(G)$. The paper \cite{KelencMixed} contains lower and upper
bounds for various graph classes. The mixed metric dimension was further
studied in \cite{SedSkrekMixed} for graphs with edge disjoint cycles, these
results were further generalized to graphs with prescribed cyclomatic number
in \cite{SedSkreTheta}. For a wider and systematic introduction of the topic
of these three variants of metric dimension, we recommend the PhD thesis of
Kelenc \cite{KelPhD}.

The focus of this paper is on the result from \cite{SedSkreBounds}, where it
was was established that both $\mathrm{dim}(G)$ and $\mathrm{edim}(G)$ of a
unicyclic graph $G$ can take its value from only two consecutive integers
which can be determined from the structure of the graph. In this paper, we go
further and characterize when these two metric dimensions take each of the two
possible values, a research direction which is proposed in \cite{Knor}. This
promptly resolves which of the following three situations $\mathrm{dim}%
(G)<\mathrm{edim}(G)$, or $\mathrm{dim}(G)=\mathrm{edim}(G)$, or
$\mathrm{dim}(G)>\mathrm{edim}(G)$ holds for a unicyclic graph $G$.

\section{Preliminaries}

Throughout the paper we will use the following notation. The cycle in a
unicyclic graph $G$ is denoted by $C=v_{0}v_{1}\cdots v_{g-1}$, where $g$ is
the length of $C$ (i.e. $g=\left\vert V(C)\right\vert $). Additionally, each
edge $v_{i}v_{i+1}$ of $C$ is denoted by $e_{i}.$ The connected component of
$G-E(C)$ containing vertex $v_{i}$ is denoted by $T_{v_{i}}$. A \emph{thread}
in a graph $G$ is a path $u_{1}u_{2}\cdots u_{k}$ in which $u_{k}$ is of
degree $1$, all other vertices of the thread are of degree $2$ and the vertex
$u_{1}$ is a neighbour of a vertex $v\in V(G)$ with $\mathrm{\deg}(v)\geq3$.
For a vertex $v$ from a unicyclic graph $G$ we say that it is a
\emph{branching} vertex if $v\not \in V(C)$ and $\deg(v)\geq3$ or $v\in V(C)$
and $\deg(v)\geq4$. We say that a vertex $v_{i}\in V(C)$ is
\emph{branch-active} if $T_{v_{i}}$ contains a branching vertex. Let us denote
by $b(C)$ the number of all branch-active vertices on $C$. As the cycle $C$ is
the only cycle in a unicyclic graph, we may use notation $b(G)$ instead of
$b(C)$ as well.

Notice that every branching vertex $v$ which belongs to $T_{v_{i}}$ has at
least two neighbours which are not distinguished by any vertex from outside of
$T_{v_{i}},$ even more - it may not be distinguished by some vertices from
$T_{v_{i}}$, see Figure \ref{FigureBranching}.a) for illustration. Similarly
holds for a pair of edges incident to branching vertices $v$ and $v_{k.}$ We
say that a set $S\subseteq V(G)$ of a (unicyclic) graph $G$ is
\emph{branch-resolving} if for every $v\in V(G)$ of degree at least $3$, the
set $S$ contains a vertex from all threads hanging at $v$ except possibly from
one such thread, see Figure \ref{FigureBranching}.b).

\begin{figure}[h]
\begin{center}
$%
\begin{array}
[c]{cccc}%
\text{a)} &
\text{\raisebox{-1\height}{\includegraphics[scale=0.5]{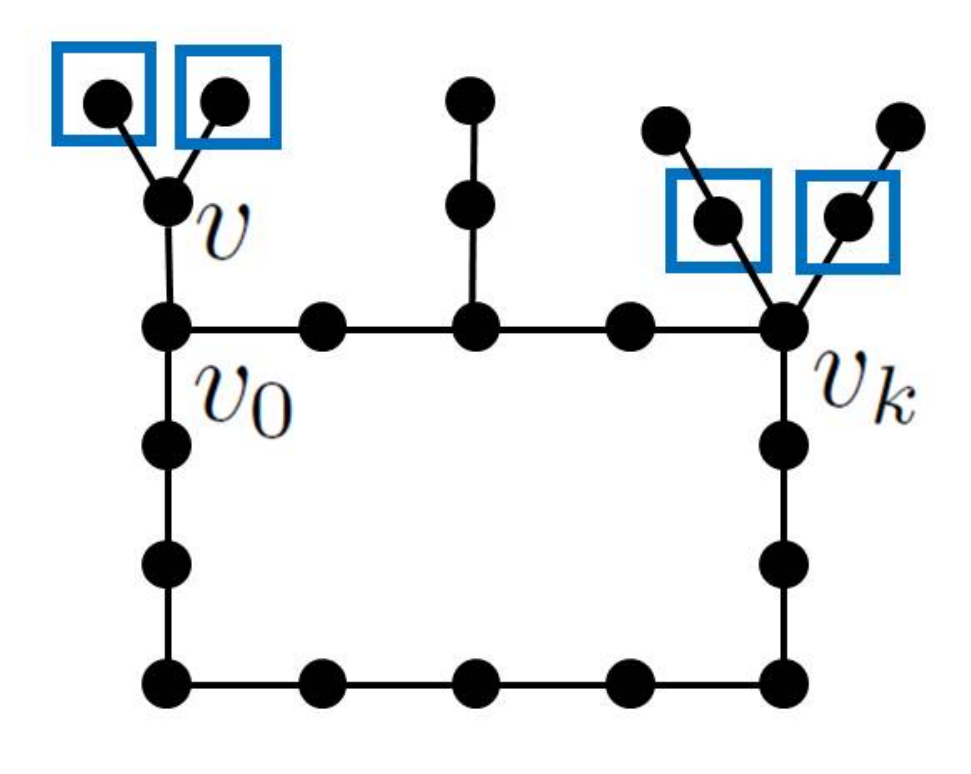}}} &
\text{b)} &
\text{\raisebox{-1\height}{\includegraphics[scale=0.5]{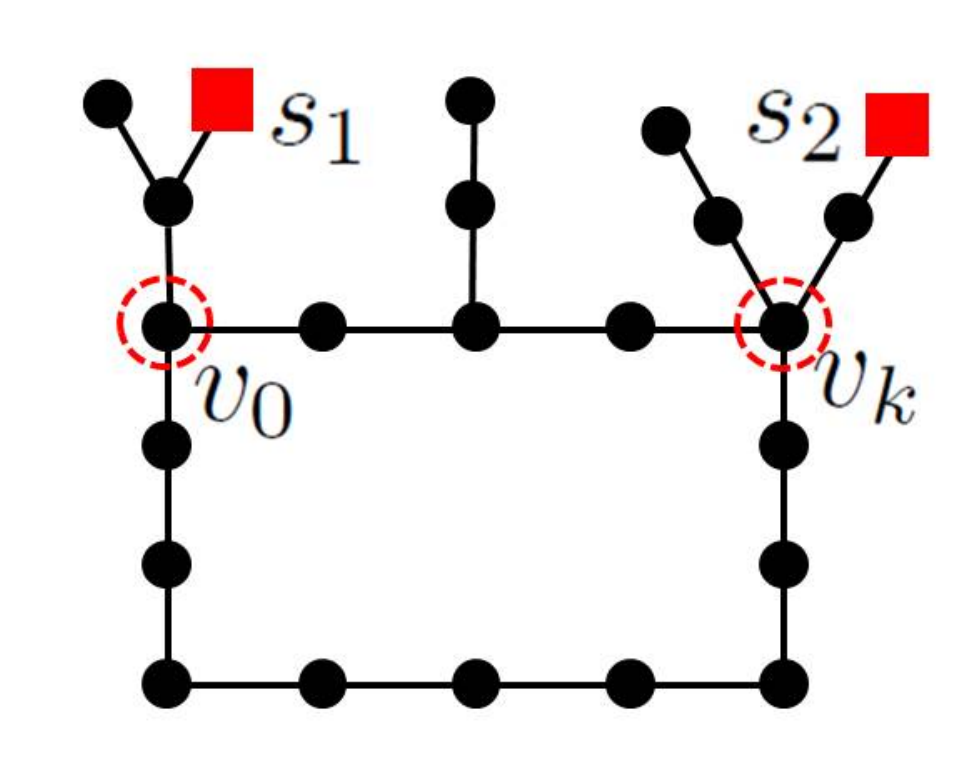}}}%
\end{array}
$
\end{center}
\caption{Both figures show the same graph for which: a) Two branching vertices
are $v$ and $v_{k},$ and each of them has a pair of neighbours which cannot be
distinguished by a vertex outside of $T_{v_{0}}$ and $T_{v_{k}}$ respectively.
b) $S=\{s_{1},s_{2}\}$ is an example of a smallest branch-resolving and the
set $\{v_{0},v_{k}\}$ is both the set of $S$-active vertices, and the set of
branch-active vertices.}%
\label{FigureBranching}%
\end{figure}

Let us denote by $\ell(v)$ the number of all threads attached to a vertex $v$
of degree $\geq3$ and let
\[
L(G)=\sum(\ell(v)-1),
\]
where the sum runs over all vertices $v$ of degree $\geq3$ of $G$ with
$l(v)>1.$ Note that for every branch-resolving set $S$ we have $\left\vert
S\right\vert \geq L(G)$ where equality holds if $S$ is a branch-resolving sets
of minimum cardinality.

For a set of vertices $S\subseteq V(G)$ we say that $v_{i}$ from $C$ is
$S$\emph{-active} if $T_{v_{i}}$ contains a vertex from $S$. The number of
$S$-active vertices on $C$ is denoted by $a_{S}(C)$. Note that for a
branch-resolving set $S$ it holds that $a_{S}(C)\geq b(C)$ with equality
holding for a smallest branch-resolving set $S$ in $G$. A smallest
branch-resolving set usually is not unique, but all smallest branch-resolving
sets have the same set of $S$-active vertices on the cycle which equals the
set of branch-active vertices on the cycle (see again Figure
\ref{FigureBranching}.b)). Finally, we say that a thread hanging at a vertex
$v$ of degree $\geq3$ is $S$\emph{-free} if it does not contain a vertex from
$S.$ Let us remark, as $v$ is not a vertex of the thread, if $v\in S,$ it does
not prevent the thread to be $S$-free.

The following two properties of branch-resolving sets were shown in
\cite{SedSkreBounds}.

\begin{lemma}
\label{Lemma_a(S)vj2} Let $S$ be a metric generator or an edge metric
generator of a unicyclic graph $G$. Then $S$ is a branch-resolving set with
$a_{S}(C)\geq2$.
\end{lemma}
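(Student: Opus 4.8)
The plan is to prove the two claimed properties separately: first that $S$ must be branch-resolving, and then that it must have at least two $S$-active vertices on the cycle $C$. Both arguments are of the same flavor — one exhibits an unresolved pair of vertices (resp. edges) under the assumption that $S$ fails the property — so I would set up a uniform piece of notation for "two vertices that look the same from everywhere on one side."

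\textbf{Step 1: $S$ is branch-resolving.} Suppose not. Then there is a vertex $v$ with $\deg(v)\ge 3$ such that at least two distinct threads $P_1 = u_1 u_2 \cdots$ and $P_2 = w_1 w_2 \cdots$ hanging at $v$ are both $S$-free. I would look at the two leaves (or, more robustly, the first vertices $u_1$ and $w_1$, or suitably chosen vertices at equal distance from $v$ on the two threads). The key observation is that for any vertex $s \in S$, a shortest path from $s$ to any vertex of $P_1 \cup P_2$ must pass through $v$, because removing $v$ disconnects each thread from the rest of $G$ and $s \notin P_1 \cup P_2$. Hence $d(s, u_j) = d(s,v) + j$ and $d(s, w_j) = d(s,v) + j$ for every $s \in S$; choosing $u_j$ and $w_j$ at the same depth $j$ (take $j$ up to the length of the shorter thread) gives two distinct vertices with identical distance vectors to $S$, contradicting that $S$ is a (vertex) metric generator. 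For the edge version, the same path-through-$v$ argument gives $d(s, u_{j-1}u_j) = d(s,v) + (j-1) = d(s, w_{j-1}w_j)$ for all $s\in S$, so the two edges at depth $j$ on $P_1$ and $P_2$ are unresolved; hence $S$ is not an edge metric generator either. This settles that any (edge) metric generator is branch-resolving.

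\textbf{Step 2: $a_S(C) \ge 2$.} Now suppose $a_S(C) \le 1$, i.e. all of $S$ lies in a single subtree $T_{v_i}$ (or $S$ meets no $T_{v_j}$ at all on the cycle side — but since $S$ is a metric generator it is nonempty, so $S \subseteq T_{v_i}$ for one fixed $i$; relabel so $i=0$). The idea is that from inside $T_{v_0}$, the cycle looks symmetric: for a vertex $x$ on the "far" part of the cycle, the two ways around $C$ from $v_0$ to $x$ have, combined with the position of $s$ in $T_{v_0}$, the distance $d(s,x) = d(s,v_0) + d_C(v_0,x)$ where $d_C$ is the cycle distance. I would then pick the antipodal-ish pair on $C$: if $g$ is even, the vertex $v_{g/2}$ and one of its cycle-neighbors, or rather the pair $v_{g/2 - 1}$ and $v_{g/2+1}$, which satisfy $d_C(v_0, v_{g/2-1}) = d_C(v_0, v_{g/2+1})$; if $g$ is odd, the pair $v_{(g-1)/2}$ and $v_{(g+1)/2}$ is equidistant from $v_0$ along the cycle. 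In either case these two distinct vertices have equal distance to every $s\in S$, so $S$ is not a vertex metric generator. For edges, one picks the two edges $e_{\lfloor (g-1)/2\rfloor}$ and $e_{\lceil (g-1)/2 \rceil}$ (the two "antipodal" edges, or the single middle edge's two neighbors depending on parity), which are at equal distance from $v_0$ along $C$ and hence from every $s \in S$; again $S$ fails to be an edge metric generator. One must be slightly careful: the vertices/edges of $C$ that I want to separate might themselves hang subtrees, but since those subtrees contain no vertex of $S$, shortest paths from $S$ still enter them only through their attachment point on $C$, so the equidistance is unaffected.

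\textbf{Main obstacle.} The routine part is the distance bookkeeping; the part that needs care is making sure the "symmetric pair" on the cycle is genuinely a pair of \emph{distinct} elements and genuinely \emph{unresolved} in all parity cases (even vs. odd $g$, and the degenerate tiny cases like $g=3$), and that the presence of hanging trees at cycle vertices other than $v_0$ does not break the equidistance — this is handled by the observation that shortest paths from $S$ to anything reach the cycle at $v_0$ first. I expect the cleanest writeup to treat $g$ even and $g$ odd in two short cases for Step 2, and to phrase Step 1 so that it simultaneously covers the vertex and edge readings by choosing the unresolved pair at a common depth on the two $S$-free threads.
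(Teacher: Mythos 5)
Your proposal is correct, and it follows essentially the argument the paper has in mind: the paper imports this lemma from \cite{SedSkreBounds} without reproving it, but its surrounding discussion and Figures \ref{FigureBranching}.a) and \ref{FigureSactive}.a) exhibit exactly your witnesses (two same-depth vertices, or the two initial edges, on a pair of $S$-free threads at a common vertex; and an equidistant pair of cycle vertices, resp.\ edges, seen from the unique $S$-active vertex $v_{0}$). The only cosmetic difference is that for $a_{S}(C)\leq1$ the paper's figure uses the two neighbours of $v_{0}$ on $C$ rather than your antipodal pair; both choices are equidistant from $v_{0}$ and hence from every $s\in S\subseteq V(T_{v_{0}})$, so either works.
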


\begin{lemma}
\label{Lemma_SameComponent} Let $G$ be a unicyclic graph and $S\subseteq V(G)$
a branch-resolving set with $a_{S}(C)\geq2$. Then, any two vertices (also any
two edges) from a same connected component of $G-E(C)$ are distinguished by
$S$.
\end{lemma}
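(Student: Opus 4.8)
The plan is to fix a connected component $T_{v_i}$ of $G-E(C)$ and show that any two elements (vertices or edges) inside $T_{v_i}$ are resolved by $S$. The key structural observation is that $T_{v_i}$ is a tree, and every path in $G$ from a vertex $s$ lying outside $T_{v_i}$ to a vertex of $T_{v_i}$ must enter $T_{v_i}$ through its root $v_i$; hence for such an $s$ and any $x\in V(T_{v_i})$ we have $d(s,x)=d(s,v_i)+d_{T_{v_i}}(v_i,x)$, and similarly for edges with the $\min$-convention. Thus a vertex $s\notin V(T_{v_i})$ resolves $x,y\in V(T_{v_i})\cup E(T_{v_i})$ exactly when $d_{T_{v_i}}(v_i,x)\neq d_{T_{v_i}}(v_i,y)$, i.e.\ the ``outside world'' sees $T_{v_i}$ only through distances measured from the root $v_i$.

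First I would dispose of the easy case: if $x$ and $y$ already satisfy $d_{T_{v_i}}(v_i,x)\neq d_{T_{v_i}}(v_i,y)$, then any $S$-active vertex $v_j$ with $j\neq i$ (one exists because $a_S(C)\ge 2$, so some component other than $T_{v_i}$ contains a point of $S$) provides a resolving vertex by the displayed additivity. The remaining case is $d_{T_{v_i}}(v_i,x)=d_{T_{v_i}}(v_i,y)=:k$, so $x$ and $y$ are ``at the same level'' of the rooted tree $T_{v_i}$. Here I would split off toward distinct subtrees: let $P_x$ and $P_y$ be the paths from $v_i$ to $x$ and to $y$ in $T_{v_i}$, and let $w$ be the last common vertex of $P_x$ and $P_y$ (possibly $w=v_i$). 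Since $x\neq y$ are at the same level, $w$ is a vertex of degree $\ge 3$ in $G$, and the two threads/branches of $T_{v_i}$ continuing from $w$ toward $x$ and toward $y$ are distinct. Because $S$ is branch-resolving, $S$ must contain a vertex from all but at most one of the threads hanging at $w$; so at least one of the two branches toward $x$, $y$ — say the one toward $x$ — contains a vertex $s\in S$ on it (if $w$ is not a pure ``thread'' branching vertex but has larger subtrees, one still argues that $S$ meets the subtree of $T_{v_i}$ rooted at $w$ in a way that breaks the symmetry; this is the delicate bookkeeping). Then $d(s,y)=d(s,w)+d_{T_{v_i}}(w,y)$ while $d(s,x)<d(s,w)+d_{T_{v_i}}(w,x)=d(s,w)+d_{T_{v_i}}(w,y)$, so $s$ resolves $x$ and $y$; the analogous inequality handles edges via the $\min$-convention.

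The main obstacle I anticipate is the sub-case analysis at the ``splitting vertex'' $w$: the branch-resolving hypothesis is phrased in terms of \emph{threads} (pendant paths) hanging at a vertex of degree $\ge 3$, but the two branches of $T_{v_i}$ leading to $x$ and $y$ need not be bare threads — they may themselves be larger subtrees, and $w$ may have threads as well as non-thread subtrees. One must argue carefully that in every configuration at $w$, the set $S$ still contains a vertex in at least one of the two relevant subtrees, or else directly in $T_{v_i}$ on one side, using that there is genuine branching and $S$ fails to resolve $x,y$ only if it is ``missing'' from both sides, which would contradict either branch-resolvability or the already-handled level argument. Once the correct case split is set up, each individual case is the short additive distance computation sketched above.
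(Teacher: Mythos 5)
There is no in-paper proof to compare against here: the paper imports Lemma~\ref{Lemma_SameComponent} from \cite{SedSkreBounds} without proof. Judged on its own, your plan follows the natural route (root $T_{v_i}$ at $v_i$, use an outside $S$-vertex for elements at different levels, and a splitting vertex $w$ for elements at the same level), and the distance computations you give are correct: any $s\in S$ outside $T_{v_i}$ satisfies $d(s,x)=d(s,v_i)+d_{T_{v_i}}(v_i,x)$, such an $s$ exists because $a_S(C)\geq 2$, and any $s\in S$ lying in one of the two branches of $T_{v_i}-w$ strictly resolves the pair because the median of $s,x,w$ differs from $w$.

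The one point you flag as ``delicate bookkeeping'' is, however, the only place where the branch-resolving hypothesis is actually used, and as written it is a gap: you assert that $S$ meets at least one of the two branches at $w$ but do not prove it when those branches are not threads. The standard way to close it is: call $T'$ a component of $T_{v_i}-w$ not containing $v_i$. If $T'$ contains no vertex of degree $\geq 3$ in $G$, then $T'$ is a thread hanging at $w$ (its end adjacent to $w$ must be an endpoint of the path, else that vertex has degree $3$; the far end is a leaf of $G$ since $T'$ meets neither $C$ nor $w$). Otherwise, pick $u\in V(T')$ of degree $\geq 3$ at maximum distance from $w$; every subtree of $T'$ hanging below $u$ is then a thread at $u$, there are at least $\deg(u)-1\geq 2$ of them, and branch-resolvability at $u$ forces an $S$-vertex into one of them, hence into $T'$. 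Consequently, if both branches at $w$ avoid $S$, both must be threads hanging at $w$, and since $\deg(w)\geq 3$ (it has two children plus either a parent or two cycle neighbours), branch-resolvability at $w$ is violated. With this inserted, your case analysis (including the edge version with the $\min$-convention, where the two edges may share their upper endpoint $w$) goes through.
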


We say that a set $S\subseteq V(G)$ is \emph{biactive} if $a_{S}(C)\geq2.$
Thus, according to Lemma \ref{Lemma_a(S)vj2}, if the set $S$ is not
branch-resolving or if it is not biactive, then $S$ certainly is not a vertex
nor an edge metric generator. The problem of non-distinguished pairs of
vertices (resp. edges), if $S$ is not branch-resolving, is already illustrated
by Figure \ref{FigureBranching}.a). Let us now consider when $S$ is not
biactive. If $a_{S}(C)=0$ then $S=\phi$ and $S$ cannot be a metric generator,
on the other hand if $a_{S}(C)=1$ then the pair of vertices (resp. edges) from
$C$ which are adjacent (resp. incident) to the only $S$-active vertex on $C$
are not distinguished by $S$, see Figure \ref{FigureSactive}.a) for illustration.

\begin{figure}[h]
\begin{center}
$%
\begin{array}
[c]{cccc}%
\text{a)} &
\text{\raisebox{-1\height}{\includegraphics[scale=0.5]{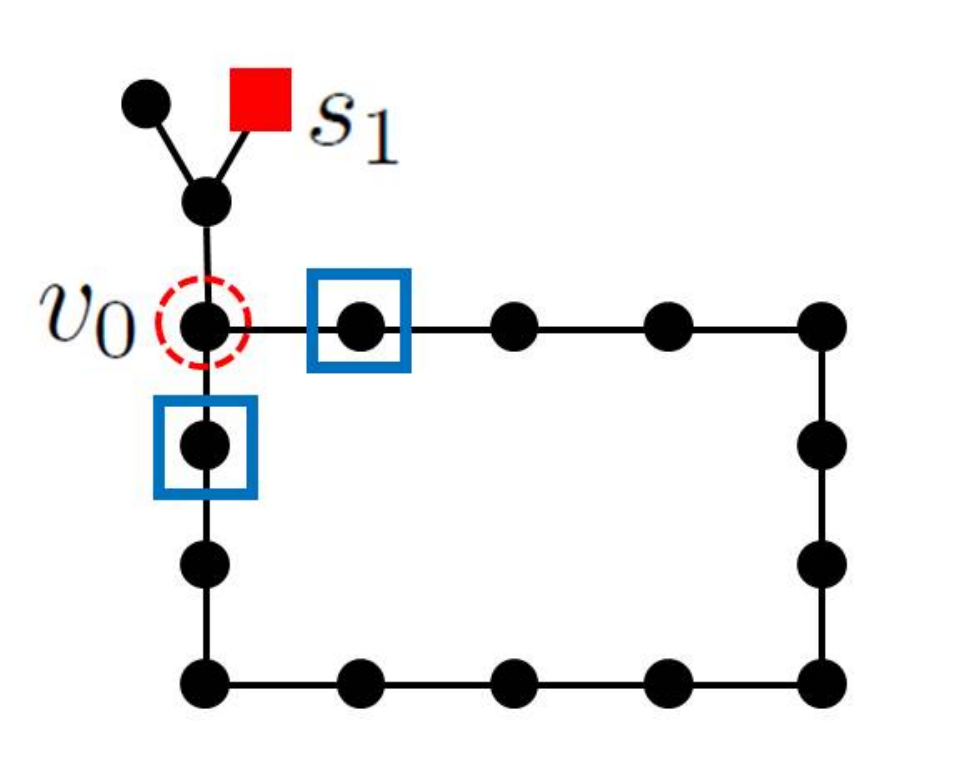}}} &
\text{b)} &
\text{\raisebox{-1\height}{\includegraphics[scale=0.5]{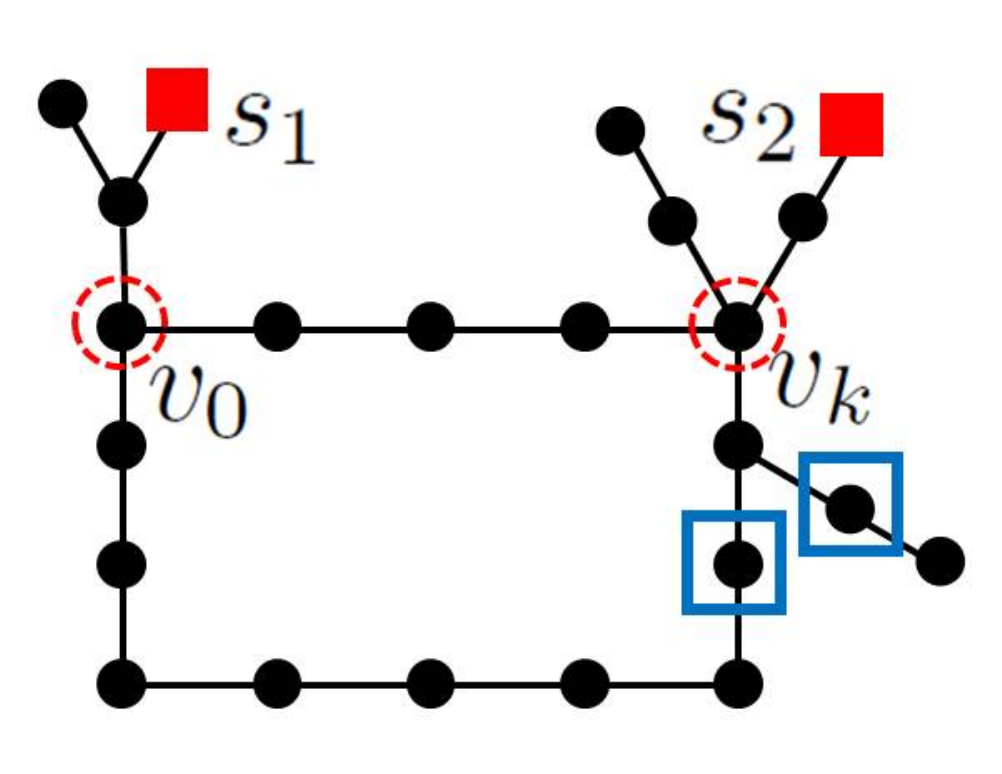}}}%
\end{array}
$
\end{center}
\caption{An example of a branch-resolving set $S$ with: a) $a_{S}(C)=1,$ b)
$a_{S}(C)=2,$ which is not a vertex metric generator. A pair of
non-distinguished vertices is marked in the figure.}%
\label{FigureSactive}%
\end{figure}

Yet, a biactive branch-resolving set $S$ may or may not be a vertex (edge)
metric generator, this depends also of the position of vertices on the cycle
which have an $S$-free thread attached. For example, the set $S$ from Figure
\ref{FigureBranching}.b) is a biactive branch-resolving set which is a vertex
(edge) metric generator, and the set $S$ from Figure \ref{FigureSactive}.b) is
also a biactive branch-resolving set but it is not a vertex (edge) metric
generator. Notice that even if we add to $S$ all vertices of $T_{v_{i}},$ for
$0\leq i\leq k,$ the set $S$ would still not be a vertex (edge) metric generator.

By the above, we need to introduce a configuration of $S$-active vertices on
the cycle $C$ which will suffice for a biactive branch-resolving set $S$ to
become a vertex (edge) metric generator. For this, let $v_{i}$, $v_{j}$, and
$v_{k}$ be three vertices of the cycle $C$. We say that $v_{i}$, $v_{j}$, and
$v_{k}$ form a \emph{geodesic triple} of vertices on $C$, if
\[
d(v_{i},v_{j})+d(v_{j},v_{k})+d(v_{i},v_{k})=|V(C)|.
\]
Observe that for any two vertices of $C$, we can easily choose a third one
such that they form a geodesic triple. It is also easy to observe that a
geodesic triple of vertices distinguishes vertices from $C.$ Moreover the
following property of geodesic triples was shown in \cite{SedSkreBounds}.

\begin{lemma}
\label{Lemma_GeodTrip} Let $G$ be a unicyclic graph, and let $S$ be a
branch-resolving set of $G$ with $a_{S}(C)\geq3$ and with three $S$-active
vertices on $C$ forming a geodesic triple. Then, $S$ is both a metric
generator and an edge metric generator of $G$.
\end{lemma}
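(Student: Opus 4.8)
The plan is to show that a branch-resolving set $S$ with $a_S(C)\geq 3$ whose $S$-active vertices include a geodesic triple distinguishes every pair of elements (vertices or edges) of $G$. By Lemma \ref{Lemma_SameComponent}, since $S$ is branch-resolving and biactive, any two vertices (resp. edges) lying in the same component $T_{v_i}$ of $G-E(C)$ are already distinguished; so it remains to treat pairs whose two members lie in different components $T_{v_i}$ and $T_{v_j}$. The key idea is that distances from a vertex $s\in T_{v_k}$ to any vertex $u\in T_{v_i}$ decompose as $d(s,u)=d(s,v_k)+d(v_k,v_i)+d(v_i,u)$ \emph{provided} the geodesic from $v_k$ to $v_i$ does not enter $T_{v_i}$ or $T_{v_k}$ prematurely, which holds by the tree-like structure of the components hanging off $C$; the only ambiguity is the choice of the shorter arc of $C$ between $v_k$ and $v_i$.

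First I would fix notation: for a vertex $u$ lying in $T_{v_i}$ write $\delta_i(u)=d(u,v_i)$ for its ``depth'' into the tree component, so that for any $S$-active vertex $v_k$ and any $s\in S\cap T_{v_k}$ we have $d(s,u)=\delta_k(s)+d_C(v_k,v_i)+\delta_i(u)$, where $d_C$ denotes distance measured along the cycle. Now take two distinct elements $x,y$ of $G$ in different components $T_{v_i}\neq T_{v_j}$. The projections $v_i\neq v_j$ are two distinct vertices of $C$, and since our three $S$-active vertices $v_p,v_q,v_r$ form a geodesic triple, I claim at least one of them, say $v_p$, satisfies $d_C(v_p,v_i)\neq d_C(v_p,v_j)$: indeed a geodesic triple ``resolves'' all vertices of $C$ (stated in the excerpt just before Lemma \ref{Lemma_GeodTrip}), so $v_i$ and $v_j$ are distinguished within $C$ by one of $v_p,v_q,v_r$, and for a vertex on the cycle the cycle-distance and the graph-distance to another cycle vertex coincide. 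Picking $s\in S\cap T_{v_p}$ and using the decomposition, $d(s,x)-d(s,y)=\bigl(d_C(v_p,v_i)-d_C(v_p,v_j)\bigr)+\bigl(\delta_i(x_0)-\delta_j(y_0)\bigr)$ where $x_0,y_0$ are the relevant vertices; since the depths are independent of the cycle-distances this typically is nonzero, but one must rule out accidental cancellation.

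The main obstacle, and where the bulk of the argument goes, is precisely handling that potential cancellation, i.e.\ the case $d_C(v_p,v_i)-d_C(v_p,v_j)=\delta_j(y_0)-\delta_i(x_0)$ for the particular active vertex $v_p$. Here I would exploit that a geodesic triple gives \emph{more than one} resolving vertex: as one moves around $C$, the three functions $v\mapsto d_C(v_p,v)$, $v\mapsto d_C(v_q,v)$, $v\mapsto d_C(v_r,v)$ have different ``turning points'' (the antipodal-type vertices where the shorter arc switches sides), so the differences $d_C(v_p,v_i)-d_C(v_p,v_j)$, $d_C(v_q,v_i)-d_C(v_q,v_j)$, $d_C(v_r,v_i)-d_C(v_r,v_j)$ are not all equal — this is the quantitative content of ``geodesic triple''. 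Hence if cancellation happens for $v_p$, switching to $v_q$ or $v_r$ changes the cycle-term while leaving the depth-term $\delta_j(y_0)-\delta_i(x_0)$ untouched, so one of the three $S$-active vertices must distinguish $x$ and $y$. For the edge case the argument is identical after replacing each edge $e=ab$ by $\min\{d(s,a),d(s,b)\}$ and noting the minimum is realized by the endpoint closer to $v_i$, which just shifts the depth by $0$ or $1$; a careful but routine bookkeeping of these $\pm1$ shifts, combined with the same ``three different turning points'' fact, closes the proof. I would also separately dispose of the degenerate sub-cases where $x$ or $y$ is a cycle vertex itself (depth $0$) or where $v_i$ or $v_j$ is one of the active vertices $v_p,v_q,v_r$, which are easier since then one of the decompositions is exact with no arc ambiguity.
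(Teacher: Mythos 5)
A preliminary remark: this lemma is not proved in the present paper at all --- it is imported from \cite{SedSkreBounds} --- so your proposal can only be judged on its own terms, not against an in-paper argument. On the vertex half, your plan is sound: the decomposition $d(s,u)=d(s,v_p)+d(v_p,v_i)+d(v_i,u)$ is valid whenever $s\in T_{v_p}$, $u\in T_{v_i}$ and $p\neq i$, and your key claim --- that for distinct $v_i,v_j$ the three differences $d(v_\cdot,v_i)-d(v_\cdot,v_j)$ over a geodesic triple cannot all coincide --- is true. But you state it as ``the quantitative content of geodesic triple'' without proof, and it is the crux: it needs the observation that each level set of $v\mapsto d(v,v_i)-d(v,v_j)$ is either of size at most $2$ or is an arc of length strictly less than $g/2$, while a geodesic triple can fit in neither. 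Relatedly, your dismissal of the sub-case where a triple vertex equals $v_i$ or $v_j$ as ``easier'' hides real work: there the decomposition of $d(s,x)$ fails entirely ($s$ and $x$ share a tree component, so $d(s,x)$ is a tree distance, not $d(s,v_i)+d(v_i,x)$), and ``not all three differences are equal'' no longer hands you a usable witness; one must argue separately that a same-component witness still works (e.g.\ via $d(s,x)\leq d(s,v_i)+d(v_i,x)$ forcing $\delta_j(y)-\delta_i(x)\leq -d(v_i,v_j)$, which contradicts the value forced on the other triple vertices).

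The genuine gap is in the edge half. Your reduction ``replace $e=ab$ by the endpoint closer to $v_i$'' presupposes that $e$ lies in a tree component $T_{v_i}$; it says nothing about edges of the cycle $C$ itself, for which the minimizing endpoint depends on which $S$-active vertex you measure from. Pairs involving cycle edges are precisely where edge resolution genuinely differs from vertex resolution --- configuration $\mathcal{A}$ in this paper is the standard example of a set that resolves all vertices of $C$ but fails on the two cycle edges at $v_0$ --- so they cannot be absorbed into ``routine $\pm1$ bookkeeping.'' You would need a separate argument for cycle-edge/cycle-edge pairs (two cycle edges unresolved by $v_p$ must be the two edges symmetric about $v_p$, and three distinct triple vertices cannot all lie on one axis of symmetry) and for cycle-edge/tree-edge pairs, where the relevant function $v\mapsto\min\{d(v,v_a),d(v,v_{a+1})\}-d(v,v_j)$ requires its own level-set analysis. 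An alternative that avoids this case split, and which this paper itself uses in Lemma \ref{Lm_edgeGeneratorSuf}, is to contract the two problematic edges and reduce to the vertex statement in the contracted graph; as written, your edge argument does not close.
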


\noindent By the all above, for a set of vertices $S\subseteq V(G)$ we can
conclude the following:

\begin{enumerate}
\item[S1.] If the set $S$ is not a biactive branch-resolving set, then $S$
cannot be a vertex (resp. edge) metric generator.

\item[S2.] If the set $S$ is a biactive branch-resolving set, then either:

\begin{enumerate}
\item $S$ is a vertex (resp. edge) metric generator by itself; or

\item $S$ is not a vertex (resp. edge) metric generator by itself, and it
suffices to introduce precisely one more vertex to $S$ in order to become a
vertex (resp. edge) metric generator according to Lemma \ref{Lemma_GeodTrip}.
\end{enumerate}
\end{enumerate}

In this paper we will establish necessary and sufficient condition under which
that one additional vertex must be introduced to a smallest biactive
branch-resolving set $S$ to become a vertex (resp. edge) metric generator. In
order to do so, we will first consider unicyclic graphs with $b(G)\geq2$ and
identify three structural configurations $\mathcal{A}$, $\mathcal{B}$, and
$\mathcal{C}$ (resp. $\mathcal{A}$, $\mathcal{D}$, and $\mathcal{E}$) in such
a graph which are the only obstacle for a smallest biactive branch-resolving
set to be a vertex (resp. edge) metric generator. Since these configurations
in graphs with $b(G)<2$ depend on the set $S,$ this approach is further
extended by introducing a more general property of a graph which enables us to
derive results which encapsulate also graphs with $b(G)<2,$ namely
$\mathcal{ABC}$-positivity and $\mathcal{ABC}$-negativity (resp.
$\mathcal{ADE}$-positivity and $\mathcal{ADE}$-negativity).

For characterization of the smallest biactive branch-resolving sets $S$ that
need to be introduced an additional vertex in order to become a metric
generator, the position of $S$-active vertices on the cycle $C$ matters. In
order to be able to deal with them, we introduce the following labelling of
the cycle.

\begin{definition}
Let $G$ be a unicyclic graph with the cycle $C$ of length $g$ and let $S$ be a
biactive branch-resolving set in $G$. We say that $C=v_{0}v_{1}\cdots v_{g-1}$
is \emph{canonically }labelled with respect to $S$ if $v_{0}$ is $S$-active
and $k=\max\{i:v_{i}$ is $S$-active$\}$ is as small as possible.
\end{definition}

\noindent Notice that when there is no geodesic triple of $S$-active vertices,
the canonical labelling implies $k\leq g/2$. In particular, if $a_{S}(C)=2$,
then $k\leq g/2$. Throughout the paper we will assume that the cycle $C$ is
canonically labelled with respect to the given biactive branch-resolving set
$S$, unless explicitly stated otherwise.

\section{Vertex metric dimension}

Regarding S2 we want to characterize when a smallest biactive branch-resolving
set is a vertex metric generator by itself, and when an additional vertex must
be added to such a set to become a vertex metric generator. For this, we
introduce three structural configurations $\mathcal{A}$, $\mathcal{B}$, and
$\mathcal{C}$ of the graph $G$ with respect to $S$ which will be crucial for
the characterization.

\begin{definition}
Let $G$ be a unicyclic graph, and let $S$ be a biactive branch-resolving set
in $G$. We say that the graph $G$ with respect to $S$ \emph{contains} configurations:

\begin{description}
\item {$\mathcal{A}$}. If $a_{S}(C)=2$, $g$ is even, and $k=g/2$;

\item {$\mathcal{B}$}. If $k\leq\left\lfloor g/2\right\rfloor -1$ and there is
an $S$-free thread hanging at a vertex $v_{i}$ for some $i\in\lbrack
k,\left\lfloor g/2\right\rfloor -1]\cup\lbrack\left\lceil g/2\right\rceil
+k+1,g-1]\cup\{0\}$;

\item {$\mathcal{C}$}. If $a_{S}(C)=2$, $g$ is even, $k\leq g/2$ and there is
an $S$-free thread of the length $\geq g/2-k$ hanging at a vertex $v_{i}$ for
some $i\in\lbrack0,k]$.
\end{description}
\end{definition}

Notice that configuration $\mathcal{C}$ with $k=g/2$ is also configuration
$\mathcal{A}$, and configuration $\mathcal{C}$ with $i\in\{0,k\}$ and $k\leq
g/2-1\ $is also configuration $\mathcal{B}$.

\begin{figure}[h]
\begin{center}
$%
\begin{array}
[c]{cccc}%
\text{a)} &
\text{\raisebox{-1\height}{\includegraphics[scale=0.5]{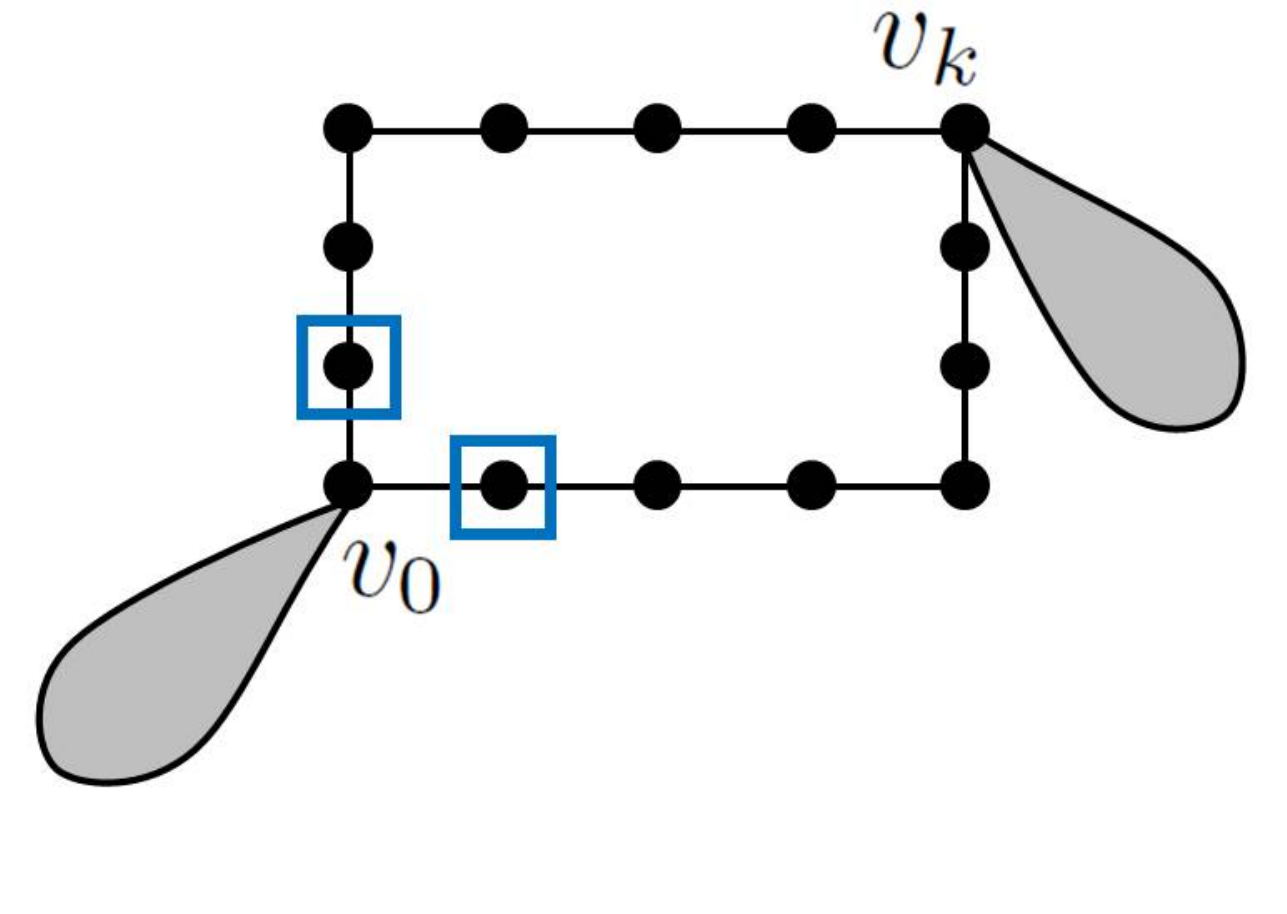}}} &
\text{b)} &
\text{\raisebox{-1\height}{\includegraphics[scale=0.5]{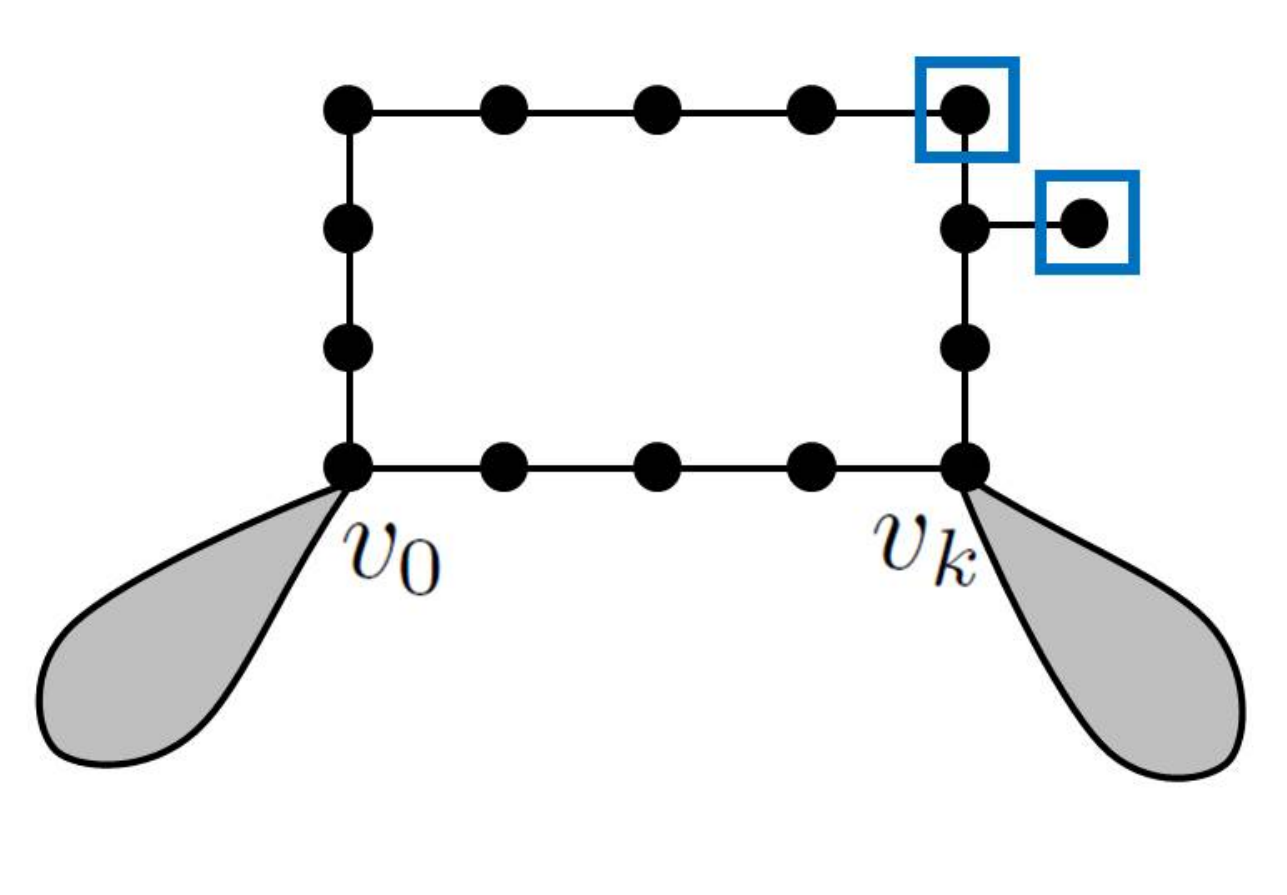}}}\\
\text{c)} &
\text{\raisebox{-1\height}{\includegraphics[scale=0.5]{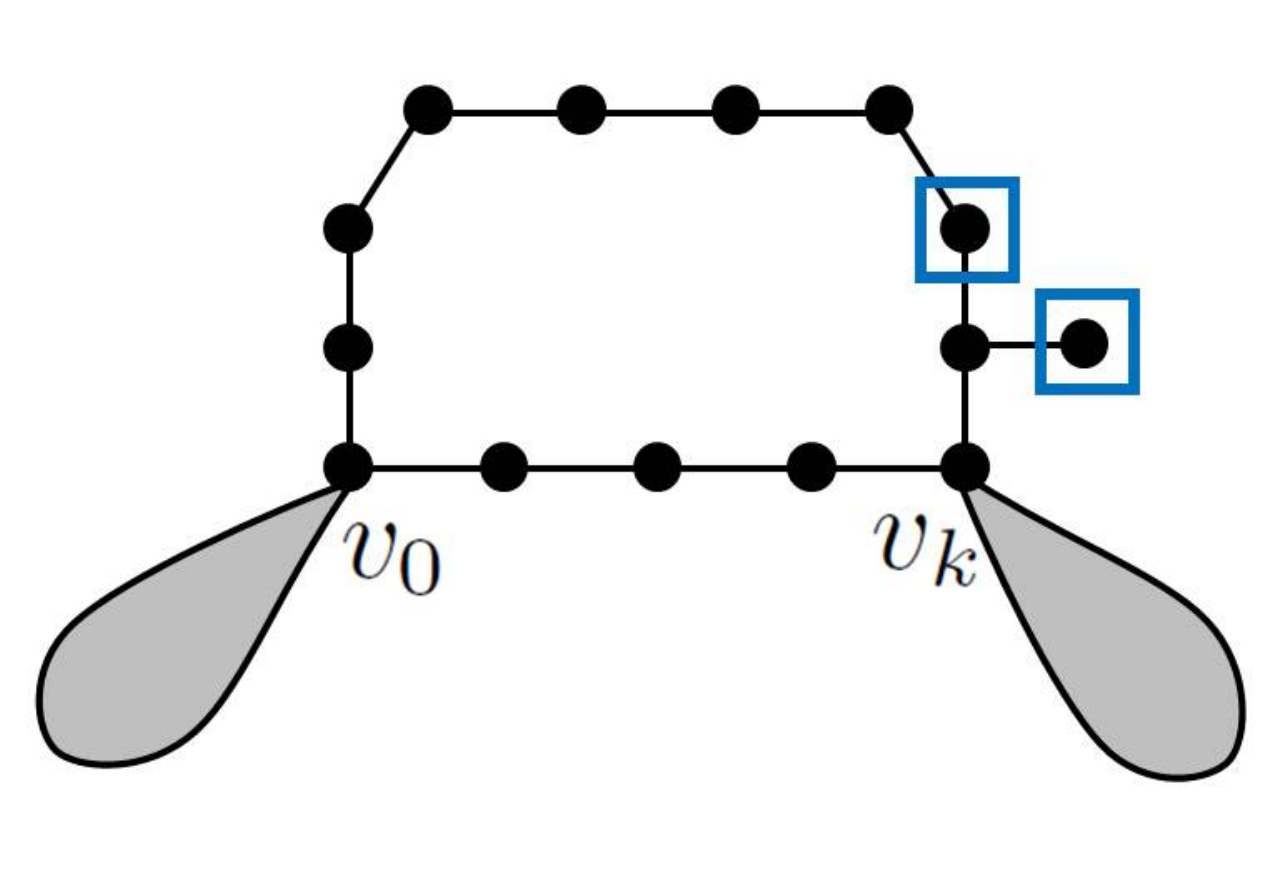}}} &
\text{d)} &
\text{\raisebox{-1\height}{\includegraphics[scale=0.5]{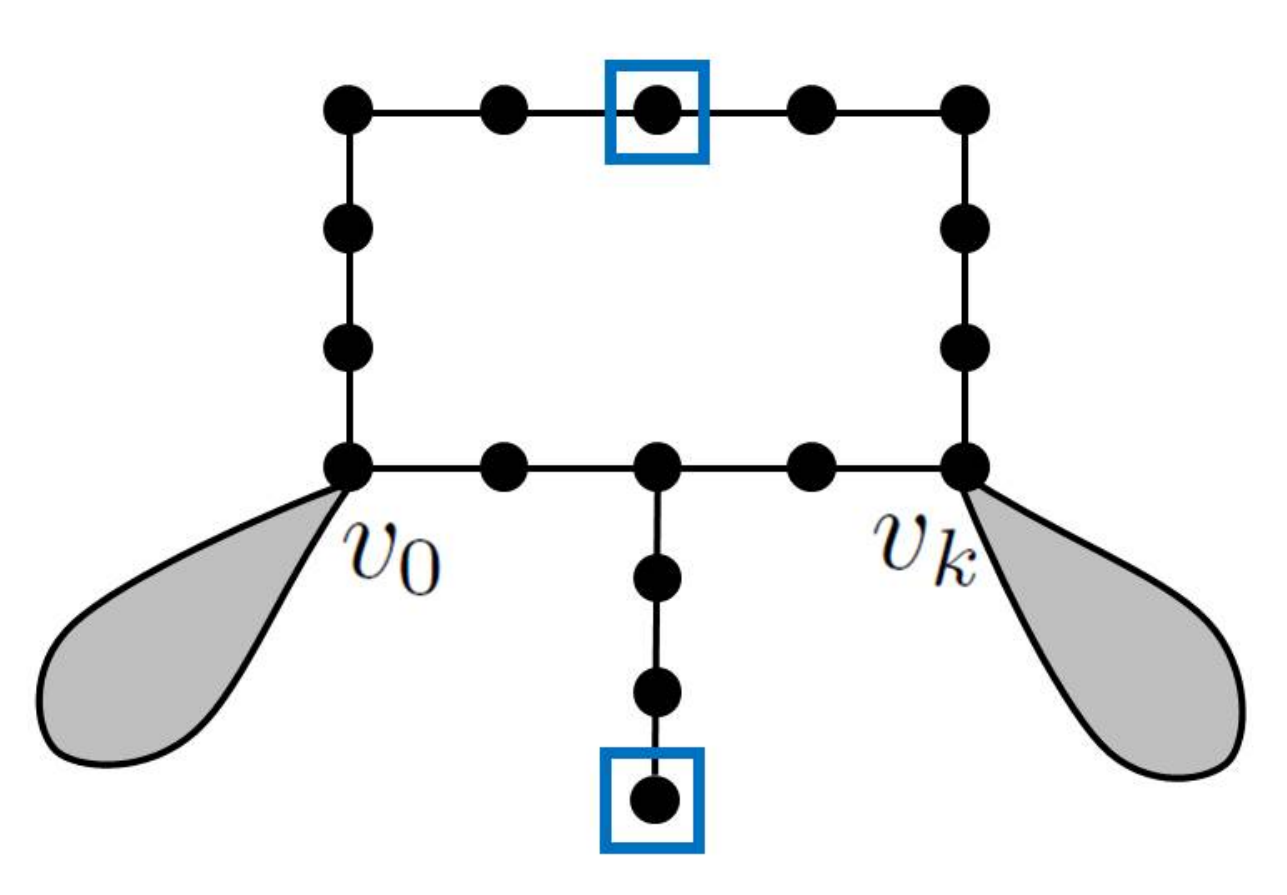}}}%
\end{array}
$
\end{center}
\caption{In all examples we consider a branch-resolving set $S,$ where $v_{0}$
and $v_{k}$ are the only two $S$-active vertices on $C.$ Configuration
$\mathcal{A}$ with respect to $S$ is illustrated by a). Configuration
$\mathcal{B}$ is shown: b) for even cycle, c) for odd cycle. Finally,
configuration $\mathcal{C}$ is illustrated by d). In every graph a pair of
vertices is marked which is not distinguished by $S.$}%
\label{Figure_YesProblem}%
\end{figure}

The above configurations are illustrated by Figure \ref{Figure_YesProblem}. In
every graph from Figure \ref{Figure_YesProblem}, a pair of vertices is marked
which is not distinguished by $S.$ In the next theorem we will show that this
holds in general, i.e. that every set $S$ for which the cycle $C$ has one of
these configurations is not a vertex metric generator, and otherwise $S$ is a
vertex metric generator.

\begin{lemma}
\label{Lm_vertexGeneratorNec}Let $G$ be a unicyclic graph and let $S$ be a
biactive branch-resolving set in $G$. If $G$ contains configuration
$\mathcal{A}$, $\mathcal{B}$, or $\mathcal{C}$ with respect to $S,$ then $S$
is not a vertex metric generator in $G.$
\end{lemma}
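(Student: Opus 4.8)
The plan is to prove the contrapositive-free statement directly: for each of the three configurations, exhibit an explicit pair of vertices of $G$ that is not distinguished by any vertex of $S$, which by definition means $S$ is not a vertex metric generator. Since the configurations are mutually non-exclusive but the degenerate overlaps are noted in the text, I would handle them as three separate cases, reusing the computation from one case when the overlap lets me.

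For configuration $\mathcal{A}$ (so $a_S(C)=2$, $g$ even, $k=g/2$, with the two $S$-active vertices being $v_0$ and $v_k$), I would consider the pair $v_1$ and $v_{g-1}$ (or, more symmetrically, a pair of vertices equidistant from both $v_0$ and $v_k$ on the two halves of the even cycle). Any vertex $s\in S$ lies in some $T_{v_0}$ or $T_{v_k}$; I would compute $d(s,v_1)$ and $d(s,v_{g-1})$ by routing through the unique branch-active vertex on the cycle in $s$'s component and using that $d(v_0,v_1)=d(v_0,v_{g-1})=1$ and, crucially, $d(v_k,v_1)=d(v_k,v_{g-1})=k-1$ since $k=g/2$. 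This symmetry forces $d(s,v_1)=d(s,v_{g-1})$ for every $s\in S$, so the pair is unresolved. Lemma \ref{Lemma_SameComponent} is not needed here but confirms all ambiguity is confined to the cycle-facing distances.

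For configuration $\mathcal{B}$, there is an $S$-free thread hanging at some $v_i$ with $i$ in the stated index set, and $k\le\lfloor g/2\rfloor-1$. Let $u$ be the neighbour of $v_i$ on that thread. The candidate unresolved pair is $u$ and a suitably chosen vertex $v_j$ on the cycle (the "reflection" of $v_i$ across the geodesic structure determined by the $S$-active vertices, as marked in Figure \ref{Figure_YesProblem}.b) and c)). The point is that since the thread is $S$-free, every $s\in S$ reaches $u$ only through $v_i$, so $d(s,u)=d(s,v_i)+1$; I then need to show $d(s,v_i)+1=d(s,v_j)$ for all $s\in S$, which again reduces to a distance comparison on the cycle that holds precisely because $i$ lies in the indicated interval (this is where the somewhat intricate interval bookkeeping, split into the even and odd $g$ subcases, enters). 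For configuration $\mathcal{C}$ (with $a_S(C)=2$, $g$ even, $k\le g/2$, and an $S$-free thread of length $\ge g/2-k$ at some $v_i$ with $0\le i\le k$), the unresolved pair is the vertex at distance exactly $g/2-k$ along the thread from $v_i$ versus an appropriate cycle vertex on the far side; the length condition is exactly what is needed so that going "out along the thread and back to the cycle" matches the distance to the mirror vertex for every $s\in S$.

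The main obstacle is the case analysis in configuration $\mathcal{B}$: the index set $[k,\lfloor g/2\rfloor-1]\cup[\lceil g/2\rceil+k+1,g-1]\cup\{0\}$ is asymmetric and its shape depends on the parity of $g$, so choosing the correct partner vertex $v_j$ for $v_i$ and verifying $d(s,u)=d(s,v_j)$ for $s$ in each of the (at most two) $S$-active components requires carefully tracking whether the geodesic from $s$ to $v_j$ passes $v_0$ or $v_k$ and comparing path lengths modulo $g$. I expect to isolate a small lemma-like computation: for a canonically labelled cycle with $S$-active set $\{v_0,\dots,v_k\}$ (really just the extreme active vertices matter), and for $s\in T_{v_p}$ with $p\in\{0,k\}$, we have $d(s,v_m)=d(s,v_p)+\min(|m-p|,\,g-|m-p|)$, and then the whole argument becomes arithmetic on these explicit expressions. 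Once that is in place, each configuration is a short verification that the chosen pair has equal distances to every element of $S$.
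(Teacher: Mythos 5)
Your proposal is correct and follows essentially the same route as the paper's proof: exhibit, for each configuration, an explicit unresolved pair, namely $v_1,v_{g-1}$ for $\mathcal{A}$, the thread-neighbour $w$ of $v_i$ paired with the adjacent cycle vertex $v_{i\pm1}$ for $\mathcal{B}$ (your condition $d(s,v_j)=d(s,v_i)+1$ pins $v_j$ down to exactly the paper's choice), and the thread vertex at distance $g/2-k$ from $v_i$ paired with $v_{g/2+k-i}$ for $\mathcal{C}$. The remaining work you flag (interval bookkeeping and the distance formula $d(s,v_m)=d(s,v_p)+\min(|m-p|,g-|m-p|)$) is precisely the routine verification the paper carries out.
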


\begin{proof}
Let us assume that $G$ contains configuration $\mathcal{A}$, $\mathcal{B}$, or
$\mathcal{C}$ with respect to $S$ and it is sufficient to find a pair of
vertices $x,x^{\prime}\in V(G)$ which are not distinguished by $S$.

If $G$ contains configuration $\mathcal{A}$ then $x=v_{1}$ and $x^{\prime
}=v_{g-1}$ are not distinguished by $S$.

Next, if $G$ contains configuration $\mathcal{B}$, let $v_{i}$ be a vertex
from $C$ with an $S$-free thread hanging at it, where $i\in\lbrack
k,\left\lfloor g/2\right\rfloor -1]\cup\lbrack\left\lceil g/2\right\rceil
+k+1,g]\cup\{0\}$. Let $w$ be the neighbour of $v_{i}$ which belongs to the
thread hanging at $v_{i}$. If $i\in\lbrack k,\left\lfloor g/2\right\rfloor
-1]$ then $x=w$ and $x^{\prime}=v_{i+1}$ are not distinguished by $S$. And if
$i\in\lbrack\left\lceil g/2\right\rceil +k+1,g-1]\cup\{0\},$ then $x=w$ and
$x^{\prime}=v_{i-1}$ are not distinguished by $S$.

Finally, suppose that $G$ contains configuration $\mathcal{C}$. Let $v_{i}$
with $i\in\lbrack0,k]$ be a vertex on the cycle $C$ with an $S$-free thread of
the length $\geq g/2-k$ hanging at it. Let $x$ be the vertex on that thread
such that $d(x,v_{i})=g/2-k$, let $j=2k-i+d(x,v_{i})=g/2+k-i$ and let
$x^{\prime}=v_{j}$. We argue that $x$ and $x^{\prime}$ are not distinguished
by $S$. Note that $i\in\lbrack0,k]$ implies $j\in\left[  g/2,g/2+k\right]  $,
therefore $d(v_{j},v_{k})=j-k$ and $d(v_{j},v_{0})=g-j$. Now a simple
calculation yields that $x$ and $x^{\prime}$ are not distinguished by
$\{v_{0},v_{k}\},$ and therefore they are not distinguished by $S$.
\end{proof}

In the above lemma we have shown that configurations $\mathcal{A}$,
$\mathcal{B}$, and $\mathcal{C}$ are the obstacles for $S$ to be a vertex
metric generator in $G.$ Let us now prove that these configurations are the
only such.

\begin{lemma}
\label{Lm_vertexGeneratorSuf}Let $G$ be a unicyclic graph and let $S$ be a
biactive branch-resolving set in $G$. If $G$ does not contain any of the
configurations $\mathcal{A}$, $\mathcal{B}$, and $\mathcal{C}$ with respect to
$S,$ then the set $S$ is a vertex metric generator in $G$.
\end{lemma}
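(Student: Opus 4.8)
The plan is to argue by contradiction. Assume $C$ is canonically labelled with respect to $S$ and that $G$ contains none of $\mathcal{A}$, $\mathcal{B}$, $\mathcal{C}$ with respect to $S$, yet $S$ fails to distinguish some pair. By Lemma~\ref{Lemma_SameComponent}, $S$ distinguishes every two vertices of one component of $G-E(C)$, so the pair is of the form $x\in T_{v_i}$, $x'\in T_{v_j}$ with $i\neq j$. If the $S$-active vertices of $C$ contain a geodesic triple, Lemma~\ref{Lemma_GeodTrip} contradicts the choice of the pair; hence they do not, and then the canonical labelling together with the failure of configuration $\mathcal{A}$ forces $k<g/2$, and all $S$-active vertices --- in particular $v_0$ and $v_k$ --- lie on the arc $v_0v_1\cdots v_k$.

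Next I would set up the distance bookkeeping. For a cycle vertex $v_l$ and any $y\in T_{v_{l'}}$ one has $d(v_l,y)=d(v_l,v_{l'})+d(v_{l'},y)$, since $v_{l'}$ is the unique cycle vertex (hence a cut vertex) of $T_{v_{l'}}$ and cycle distances are arc lengths; moreover, if $s\in S$ lies in $T_{v_l}$ and neither $x$ nor $x'$ lies in $T_{v_l}$, then $s$ distinguishes $x,x'$ exactly when $v_l$ does. Writing $t=d(v_i,x)$ and $t'=d(v_j,x')$, the failure of $v_0$ and of $v_k$ to distinguish $x,x'$ reads $d(v_0,v_i)+t=d(v_0,v_j)+t'$ and $d(v_k,v_i)+t=d(v_k,v_j)+t'$; subtracting gives $f(i)=f(j)$ for $f(l):=d(v_0,v_l)-d(v_k,v_l)$. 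A direct computation shows $f$ is strictly increasing along $v_0\cdots v_k$, equal to the constant $k$ along $v_k\cdots v_{\lfloor g/2\rfloor}$, strictly decreasing along the following short arc, and equal to the constant $-k$ along the arc returning to $v_0$. Hence $f(i)=f(j)$ with $i\neq j$ leaves only the cases: $i,j$ both in the flat-$k$ part, $i,j$ both in the flat-$(-k)$ part, or one index in the increasing arc and one in the decreasing arc with $i+j=g/2+k$ (possible only when $g$ is even).

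Then I would run through these cases. In each, solving the two displayed equalities pins $t-t'$ to an explicit nonzero value, so $x,x'$ are not both cycle vertices and the thread in, say, $T_{v_i}$ carrying the deeper of the two is nontrivial. If the path from $v_i$ to that vertex met a branching vertex $b$, then by branch-resolving $S$ has a vertex hanging off $b$ away from this path, and the same one-line identity shows it distinguishes $x,x'$, a contradiction; so the deeper vertex lies on a thread of $T_{v_i}$, of length at least the pinned value of $|t-t'|$. If that thread contained a vertex of $S$, a short computation (substituting the pinned value of $t-t'$) shows that vertex distinguishes $x,x'$, again a contradiction. Thus the thread is $S$-free, and the pinned data place its attachment vertex $v_i$ in exactly the index range of $\mathcal{B}$ (in the two flat cases) or of $\mathcal{C}$ (in the mixed case, where $g$ is even, where $a_S(C)=2$ is forced because the same computation applied to any hypothetical third $S$-active vertex would let it distinguish $x,x'$, and where the thread has length $\ge g/2-k$). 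Either way $G$ contains $\mathcal{B}$ or $\mathcal{C}$ with respect to $S$, contradicting the hypothesis and finishing the proof.

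The clean architecture notwithstanding, the work concentrates in the third step, and the main obstacle will be the case bookkeeping: for each admissible pair $(i,j)$ one must identify which of the two threads is the offending one, compute $|t-t'|$, and verify that the relevant index falls precisely into $[k,\lfloor g/2\rfloor-1]$, into $[\lceil g/2\rceil+k+1,g-1]\cup\{0\}$, or into $[0,k]$; the off-by-one checks at the junctions $l=k$, $l=\lfloor g/2\rfloor$, $l=\lceil g/2\rceil+k$ of the pieces of $f$ are exactly where the index ranges of $\mathcal{B}$ and $\mathcal{C}$ are tight, so sign errors there are easy to make. Also somewhat delicate, though routine, are the reduction showing the deeper vertex really sits on a thread (ruling out that it is buried inside a branch of $T_{v_i}$, via the branch-resolving hypothesis) and the handful of pairs for which every $S$-active vertex has $x$ or $x'$ in its own component (e.g. when $a_S(C)=2$ and $\{i,j\}=\{0,k\}$), which must be settled by applying the distance identities to actual elements of $S$ rather than to $v_0,v_k$ themselves.
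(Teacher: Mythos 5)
Your overall architecture matches the paper's (contradiction, reduce to a non-distinguished pair $x\in T_{v_i}$, $x'\in T_{v_j}$ with $i<j$ and $k<g/2$, then case analysis on $(i,j)$), and your function $f(l)=d(v_0,v_l)-d(v_k,v_l)$ is a genuinely nicer way to organize what the paper does by hand in its Cases 1--3: the monotonicity profile of $f$ immediately locates the admissible pairs $(i,j)$ and pins $t-t'$, and your treatment of the ``is the deep vertex on an $S$-free thread at $v_i$'' question (reducing to a branching vertex $b$ and showing $b=v_i$ is forced) is essentially sound, modulo the observation that an $S$-vertex in the branch containing $x$ need not hang directly off $b$ but can be found below a deepest branching vertex of that branch.

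However, there is a genuine gap in the step where you convert ``$S$ does not distinguish $x,x'$'' into the two displayed equalities $d(v_0,v_i)+t=d(v_0,v_j)+t'$ and $d(v_k,v_i)+t=d(v_k,v_j)+t'$. That conversion is valid only when neither $x$ nor $x'$ lies in $T_{v_0}$ or $T_{v_k}$. You flag the exceptional pairs as ``$\{i,j\}=\{0,k\}$,'' but the problematic set is much larger: every pair with $i\in\{0,k\}$ and $j>k$ (and its mirror) breaks one of the two equalities, and these are precisely the paper's Cases 4 and 5 --- the bulk of its proof. In that situation the correct consequence of ``$S\cap V(T_{v_k})$ fails'' is not $d(v_k,x)=d(v_k,x')$ but $d(x,v_k)=d(x',v_k)+2d(v,v_k)>d(x',v_k)$, where $v\neq v_k$ is the point where the path from $x$ to the witnessing $s\in S\cap V(T_{v_k})$ branches off; so $f(i)=f(j)$ simply need not hold and your trichotomy does not apply. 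Closing these cases requires combining that strict inequality with the single surviving $v_0$-equality, splitting on $j\leq\lfloor g/2\rfloor$ versus $j>\lfloor g/2\rfloor$, and separately handling the odd-$g$, $k=\lfloor g/2\rfloor$ situation in which an $S$-free thread at $v_k$ does \emph{not} produce configuration $\mathcal{B}$ and must be killed by a direct distance computation through $v_0$. Calling this ``routine'' and deferring it to ``applying the distance identities to actual elements of $S$'' understates it: it is a different argument with new ingredients, not an instance of your $f$-based machinery, and without it the proof is incomplete.
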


\begin{proof}
Let us assume that $G$ does not contain any of the configurations
$\mathcal{A}$, $\mathcal{B}$, and $\mathcal{C}$ with respect to $S$ and let us
suppose the contrary to the claim, i.e. $S$ is not a vertex metric generator.
By Lemma \ref{Lemma_GeodTrip}, there is no geodesic triple of $S$-active
vertices on $C$, and hence the canonical labelling of $G$ implies $k\leq g/2$.

Suppose first that $k=g/2$, which implies that $g$ is even. If $a_{S}(C)\geq
3$, then $k=g/2$ implies that the third $S$-active vertex on $C$ together with
$v_{0}$ and $v_{k}$ forms a geodesic triple of $S$-active vertices on $C,$
which is a contradiction. On the other hand, if $a_{S}(C)=2$, then $k=g/2$
implies that the graph $G$ contains the configuration $\mathcal{A}$ which is
again a contradiction.

Suppose now that $k<g/2$. As we assumed that $S$ is not a vertex metric
generator, there must exist a pair of vertices $x$ and $x^{\prime}$ in $G$
which is not distinguished by $S$. Let us assume that $x\in V(T_{v_{i}})$ and
$x^{\prime}\in V(T_{v_{j}})$. If $i=j$, then $x$ and $x^{\prime}$ would be
distinguished by $S$ according to Lemma \ref{Lemma_SameComponent}. Therefore,
without loss of generality, we may assume $i<j$. Now, we consider the
following five cases regarding $i$ and $j$ in order to conclude the proof.

\medskip\noindent\textbf{Case 1:} $i,j\in\left[  0,k\right]  $. If $x$ and
$x^{\prime}$ are not distinguished by $S\cap V(T_{v_{0}})$, then $i<j$ implies
$d(x,v_{i})>d(x^{\prime},v_{j})$ which further implies $d(x,s)>d(x^{\prime
},s)$ for every $s\in S\cap V(T_{v_{k}})$. Therefore, $S$ distinguishes $x$
and $x^{\prime}$ which is a contradiction.

\medskip\noindent\textbf{Case 2:} $i,j\in\left[  k+1,g-1\right]  $. Since we
do not have a configuration $\mathcal{B}$, it must be $\left\lfloor
g/2\right\rfloor \leq i<j\leq\left\lceil g/2\right\rceil +k$. But then notice
the following. If $d(v_{j},x^{\prime})<d(v_{i},x)$ then $v_{0}$ distinguish
these two vertices as $x^{\prime}$ is closer to $v_{0}$ than $x$. And,
similarly if $d(v_{j},x^{\prime})>d(v_{i},x)$ then $v_{k}$ distinguishes these
two vertices as $x$ is closer. So we infer $d(v_{j},x^{\prime})=d(v_{i},x)$.
Then $v_{0}$ does not distinguish $x$ and $x^{\prime}$ only if $g$ is odd and
$v_{i}$ and $v_{j}$ are the antipodals of $v_{0}$. But then $v_{i}$ and
$v_{j}$ cannot be antipodals of $v_{k}$, and so $v_{k}$ distinguishes them.

\medskip\noindent\textbf{Case 3:} $i\in\left[  1,k-1\right]  $ \textit{and}
$j\in\left[  k+1,g-1\right]  $. If $j\leq\left\lfloor g/2\right\rfloor ,$ then
the fact that $S\cap V(T_{v_{k}})$ does not distinguish $x$ and $x^{\prime}$
implies $d(x,v_{i})<d(x^{\prime},v_{i}),$ so $x$ and $x^{\prime}$ are
distinguished by $S\cap V(T_{v_{0}}).$ The similar argument holds if
$j\geq\left\lceil g/2\right\rceil +k.$ Therefore, we may assume that
$j\in\lbrack\left\lfloor g/2\right\rfloor +1,\left\lceil g/2\right\rceil
+k-1],$ which implies that $d(v_{j},v_{0})+d(v_{j},v_{k})=g-k.$ Now, the facts
that $S\cap V(T_{v_{0}})$ and $S\cap V(T_{v_{k}})$ do not distinguish $x$ and
$x^{\prime}$ imply
\begin{align*}
d(x,v_{i})+d(v_{i},v_{0})  &  =d(x^{\prime},v_{j})+d(v_{j},v_{0})\\
d(x,v_{i})+d(v_{i},v_{k})  &  =d(x^{\prime},v_{j})+d(v_{j},v_{k}),
\end{align*}
respectively. Summing these two equalities further implies $d(x,v_{i}%
)-d(x^{\prime},v_{j})=g/2-k$. The fact $k<g/2$ implies $g/2-k>0$, so plugging
this expression in the above equalities we obtain $d(v_{j},v_{0}%
)>d(v_{i},v_{0})$ and $d(v_{j},v_{k})>d(v_{i},v_{k})$. Now, in the case when
$a_{S}(C)\geq3$, there is $l\in\left(  0,k\right)  $ such that $v_{l}$ is
$S$-active, but then for $l\in\left(  0,i\right]  $ the fact that $S\cap
V(T_{v_{0}})$ does not distinguish $x$ and $x^{\prime}$ implies $S\cap
V(T_{v_{l}})$ distinguishes them which is a contradiction, and for
$l\in\left[  i,k\right)  $ when $S\cap V(T_{v_{k}})$ instead of $S\cap
V(T_{v_{0}})$ a similar argument holds. So, we may assume that $a_{S}(C)=2$.
But in this case the fact $d(x,v_{i})-d(x^{\prime},v_{j})=g/2-k>0$ implies
$g/2-k$ is an integer so $g$ must be even and also it implies that there is a
sufficiently long $S$-free thread hanging at $v_{i}$ for $G$ to contain
configuration $\mathcal{C}$ which is a contradiction.

\medskip\noindent\textbf{Case 4:} $i=k$ \textit{and} $j>k$. The fact that
$S\cap V(T_{v_{k}})$ does not distinguish $x$ and $x^{\prime}$ implies
$d(x,v_{k})>d(x^{\prime},v_{j})$, which further implies $x\not =v_{k}$. Notice
that $x$ does not belong to an $S$-free thread hanging at $v_{k}$ as that
would mean $G$ contains configuration $\mathcal{B}$ in all cases except when
$g$ is odd and $k=\left\lfloor g/2\right\rfloor ,$ but in that case
$d(x,v_{k})>d(x^{\prime},v_{j})$ implies $d(x,v_{0})>d(x^{\prime},v_{0}),$ so
$x$ and $x^{\prime}$ are distinguished by $S\cap V(T_{v_{0}}),$ which is a
contradiction. Since $x$ does not belong to an $S$-free thread, we conclude
there must exist a vertex $s\in S\cap V(T_{v_{k}})$ distinct from $v_{k}$ such
that the shortest path $P$ from $x$ to $s$ does not contain $v_{k}.$

Let $v$ be the vertex on the path $P$ which is closest to $v_{k}$. Since $x$
and $x^{\prime}$ are not distinguished by $S\cap V(T_{v_{k}})$, by definition
we have $d(x,s)=d(x^{\prime},s),$ which implies%
\begin{equation}
d(x,v)=d(x^{\prime},v_{k})+d(v_{k},v) \label{Eq1}%
\end{equation}
and hence
\begin{equation}
d(x,v_{k})=d(x^{\prime},v_{k})+2d(v_{k},v). \label{Eq2}%
\end{equation}
The equality (\ref{Eq2}) implies $d(x,v_{k})>d(x^{\prime},v_{k})$. If
$j\leq\left\lfloor g/2\right\rfloor $, then the shortest path from both $x$
and $x^{\prime}$ to $v_{0}$ leads through $v_{k}$, so $d(x,v_{k})>d(x^{\prime
},v_{k})$ would imply that $x$ and $x^{\prime}$ are distinguished by $S\cap
V(T_{v_{0}})$, a contradiction.

Suppose therefore that $j>\left\lfloor g/2\right\rfloor $. In this case a
shortest path from $x^{\prime}$ to $v_{0}$ does not lead through $v_{k},$ so
we have $d(x^{\prime},v_{0})=d(x^{\prime},v_{j})+g-j$. Also, equality
(\ref{Eq1}) implies
\begin{align*}
d(x,v_{0})  &  =d(x,v)+d(v,v_{k})+k\\
&  =d(x^{\prime},v_{k})+2d(v,v_{k})+k\\
&  =d(x^{\prime},v_{j})+j+2d(v,v_{k}).
\end{align*}
Subtracting these expressions for $d(x,v_{0})$ and $d(x^{\prime},v_{0})$ we
obtain%
\[
d(x^{\prime},v_{0})-d(x,v_{0})=g-2j-2d(v,v_{k}),
\]
where the fact that $d(v_{k},v)\not =0$ further implies $d(x^{\prime}%
,v_{0})-d(x,v_{0})\leq g-2j-2$. Note that $j>\left\lfloor g/2\right\rfloor $
implies $g-2j-2<0$, so we conclude that $S\cap V(T_{v_{0}})$ distinguishes $x$
and $x^{\prime}$ which is a contradiction.

\medskip\noindent\textbf{Case 5:} $i=0$ \textit{and} $j>k$. This case is
analogous to Case 4.

\bigskip By the above analysis, we have shown that any pair of vertices from
$G$ is distinguished by $S,$ so $S$ is a vertex metric generator, which
concludes the proof.
\end{proof}

The last two lemmas give us the necessary and sufficient condition for a
biactive branch-resolving set of vertices $S$ to be a vertex metric generator.
In order to establish the exact value of the vertex metric dimension for a
unicyclic graph $G$ we have to find a smallest set $S$ which meets the
condition from Lemmas \ref{Lm_vertexGeneratorNec} and
\ref{Lm_vertexGeneratorSuf}.

Notice that a branch-resolving set $S$ activates all branch-active vertices in
$G,$ so if $b(G)\geq2$ then every branch-resolving set is biactive. Therefore,
for a smallest branch-resolving set $S$ in a unicyclic graph with $b(G)\geq2$,
the set of $S$-active vertices is fixed by the structure of $G$ and coincides
with the set of branch-active vertices. Since the presence of configurations
$\mathcal{A}$, $\mathcal{B}$, and $\mathcal{C}$ by definition depends on the
position of $S$-active vertices, we can observe the following.

\begin{remark}
\label{Obs_b2_notdependS}If a unicyclic graph $G$ contains at least two
branch-active vertices on $C$, i.e. $b(G)\geq2$, then the graph $G$ contains
configuration $\mathcal{A}$, $\mathcal{B}$, or $\mathcal{C}$ either with
respect to all smallest biactive branch-resolving sets or to none of them.
\end{remark}

The above observation allows us to omit the set $S$ in the definition of
containment of configurations $\mathcal{A}$, $\mathcal{B}$, and $\mathcal{C}$
in a unicyclic graph $G$. We can simply say "$G$ contains a configuration"
instead of "$G$ contains a configuration with respect to $S$". Now, for
unicyclic graphs with at least two branch-active vertices we can state and
prove the following theorem which gives the exact value of the vertex metric dimension.

\begin{theorem}
\label{Tm_vDim_bc2}Let $G$ be a unicyclic graph with at least two
branch-active vertices, i.e. $b(G)\geq2$. Then%
\[
\mathrm{dim}(G)=L(G)+\Delta,
\]
where $\Delta=0$ if the graph $G$ does not contain any of the configurations
$\mathcal{A}$, $\mathcal{B}$, $\mathcal{C}$, and $\Delta=1$ otherwise.
\end{theorem}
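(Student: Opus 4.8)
The plan is to show the two inequalities $\mathrm{dim}(G)\ge L(G)+\Delta$ and $\mathrm{dim}(G)\le L(G)+\Delta$ separately, using Lemmas~\ref{Lm_vertexGeneratorNec} and~\ref{Lm_vertexGeneratorSuf} together with Remark~\ref{Obs_b2_notdependS} as the bridge between the combinatorial quantity $L(G)$ and the existence of a suitable metric generator. The starting point is the observation, already recorded in the preliminaries, that any vertex metric generator is a biactive branch-resolving set and that a smallest branch-resolving set has cardinality exactly $L(G)$; since $b(G)\ge2$, such a set is automatically biactive and its set of $S$-active vertices on $C$ is forced by the structure of $G$ (namely it is the set of branch-active vertices). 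This is what makes $\Delta$ well defined independently of the chosen smallest biactive branch-resolving set.

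First I would establish the upper bound. Take $S_0$ to be a smallest branch-resolving set, so $|S_0|=L(G)$, and it is biactive. If $G$ contains none of $\mathcal{A}$, $\mathcal{B}$, $\mathcal{C}$ (so $\Delta=0$), then by Lemma~\ref{Lm_vertexGeneratorSuf} the set $S_0$ is already a vertex metric generator, giving $\mathrm{dim}(G)\le L(G)$. If $G$ does contain one of these configurations (so $\Delta=1$), then by the discussion in S2 — concretely by Lemma~\ref{Lemma_GeodTrip} — it suffices to add one more vertex: choose $v_\ell\in V(C)$ so that, together with $v_0$ and $v_k$, it forms a geodesic triple, and add to $S_0$ any vertex of $T_{v_\ell}$ (e.g.\ $v_\ell$ itself). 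The resulting set has size $L(G)+1$, is still branch-resolving, and now has three $S$-active vertices on $C$ forming a geodesic triple, hence is a vertex metric generator by Lemma~\ref{Lemma_GeodTrip}; thus $\mathrm{dim}(G)\le L(G)+1=L(G)+\Delta$.

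For the lower bound, let $S$ be any vertex metric generator of $G$ of minimum size. By Lemma~\ref{Lemma_a(S)vj2}, $S$ is a biactive branch-resolving set, so $|S|\ge L(G)$ always. If $\Delta=0$ there is nothing more to prove. If $\Delta=1$, suppose for contradiction that $|S|=L(G)$; then $S$ is a \emph{smallest} biactive branch-resolving set, so by Remark~\ref{Obs_b2_notdependS} the graph $G$ contains configuration $\mathcal{A}$, $\mathcal{B}$, or $\mathcal{C}$ with respect to $S$ (this is exactly where we use that the configurations, for $b(G)\ge2$, do not depend on the choice of smallest biactive branch-resolving set). But then Lemma~\ref{Lm_vertexGeneratorNec} says $S$ is not a vertex metric generator, a contradiction. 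Hence $|S|\ge L(G)+1$, completing the proof that $\mathrm{dim}(G)=L(G)+\Delta$.

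The main subtlety is not any single computation but making sure the logical chain is airtight: specifically, that "$G$ contains a configuration" is genuinely a property of $G$ alone (not of the particular minimum generator), which is precisely the content of Remark~\ref{Obs_b2_notdependS} and relies on $b(G)\ge2$ forcing the $S$-active vertices to coincide with the branch-active vertices. One should also be slightly careful in the $\Delta=1$ upper-bound step to check that adding a vertex to achieve a geodesic triple is always possible — this is the remark after the definition of geodesic triple, that for any two vertices of $C$ a third can be chosen completing a geodesic triple — and that adding such a vertex cannot destroy the branch-resolving property, which is immediate since adding vertices to a branch-resolving set keeps it branch-resolving.
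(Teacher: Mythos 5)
Your proof is correct and follows essentially the same route as the paper: take a smallest branch-resolving set (automatically biactive and of size $L(G)$ since $b(G)\ge 2$), apply Lemma~\ref{Lm_vertexGeneratorSuf} when no configuration is present, and otherwise add one vertex completing a geodesic triple and apply Lemma~\ref{Lemma_GeodTrip}. Your explicit lower-bound argument via Lemma~\ref{Lemma_a(S)vj2}, Remark~\ref{Obs_b2_notdependS}, and Lemma~\ref{Lm_vertexGeneratorNec} is in fact spelled out more carefully than in the paper, which leaves that direction largely implicit.
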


\begin{proof}
Let $S$ be a smallest branch-resolving set in $G.$ Since $G$ has at least two
branch-active vertices on $C$, i.e. $b(G)\geq2,$ it follows that $S$ is
biactive, and so $a_{S}(C)=b(G).$ If the graph $G$ does not contain any of the
configurations $\mathcal{A}$, $\mathcal{B}$, and $\mathcal{C}$, then Lemma
\ref{Lm_vertexGeneratorSuf} implies that $S$ is a vertex metric generator.
Therefore, $\mathrm{dim}(G)=\left\vert S\right\vert =L(G).$

On the other hand, if $G$ contains any of the configurations $\mathcal{A}$,
$\mathcal{B}$, or $\mathcal{C}$, then $S$ is not a vertex metric generator
according to Lemma \ref{Lm_vertexGeneratorNec}. Let $v$ be a vertex from $C$
which forms a geodesic triple with two branch-active vertices on $C,$ and let
$S^{\prime}=S\cup\{v\}.$ Notice that according to Lemma \ref{Lemma_GeodTrip},
the set $S^{\prime}$ is a vertex metric generator, therefore $\mathrm{dim}%
(G)=\left\vert S^{\prime}\right\vert =L(G)+1.$
\end{proof}

\begin{figure}[h]
\begin{center}
$%
\begin{array}
[c]{cccc}%
\text{a)} &
\text{\raisebox{-1\height}{\includegraphics[scale=0.5]{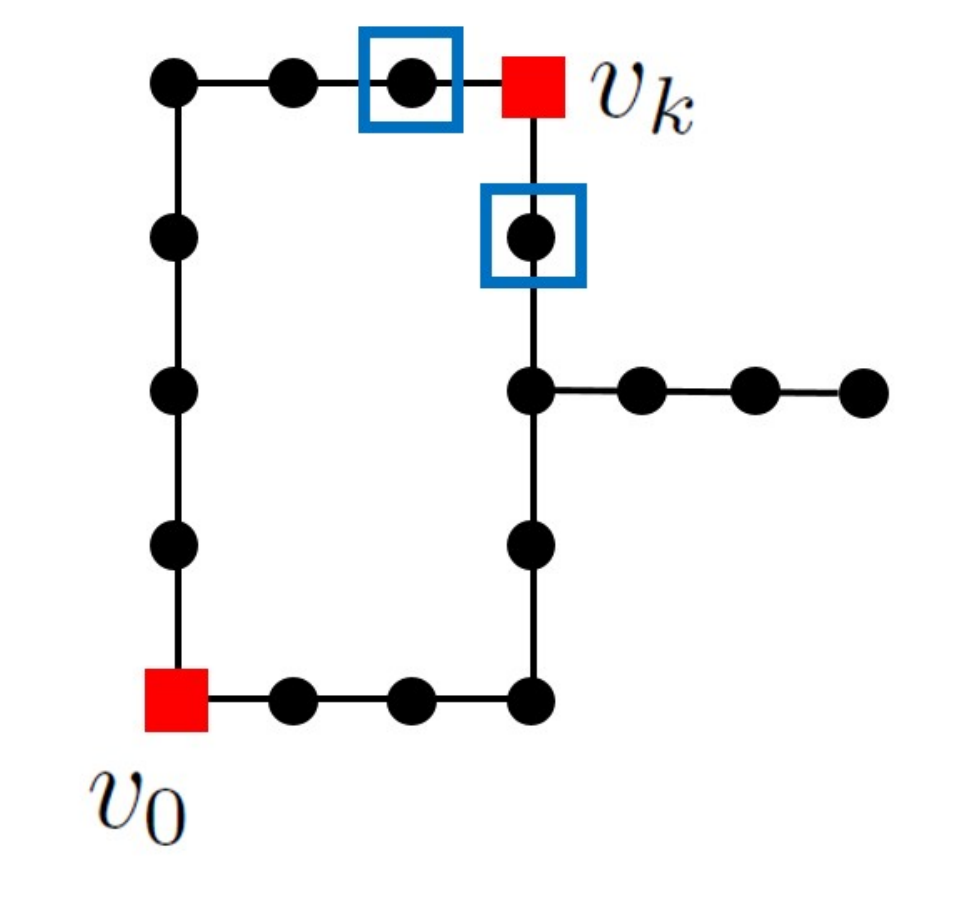}}} &
\text{b)} &
\text{\raisebox{-1\height}{\includegraphics[scale=0.5]{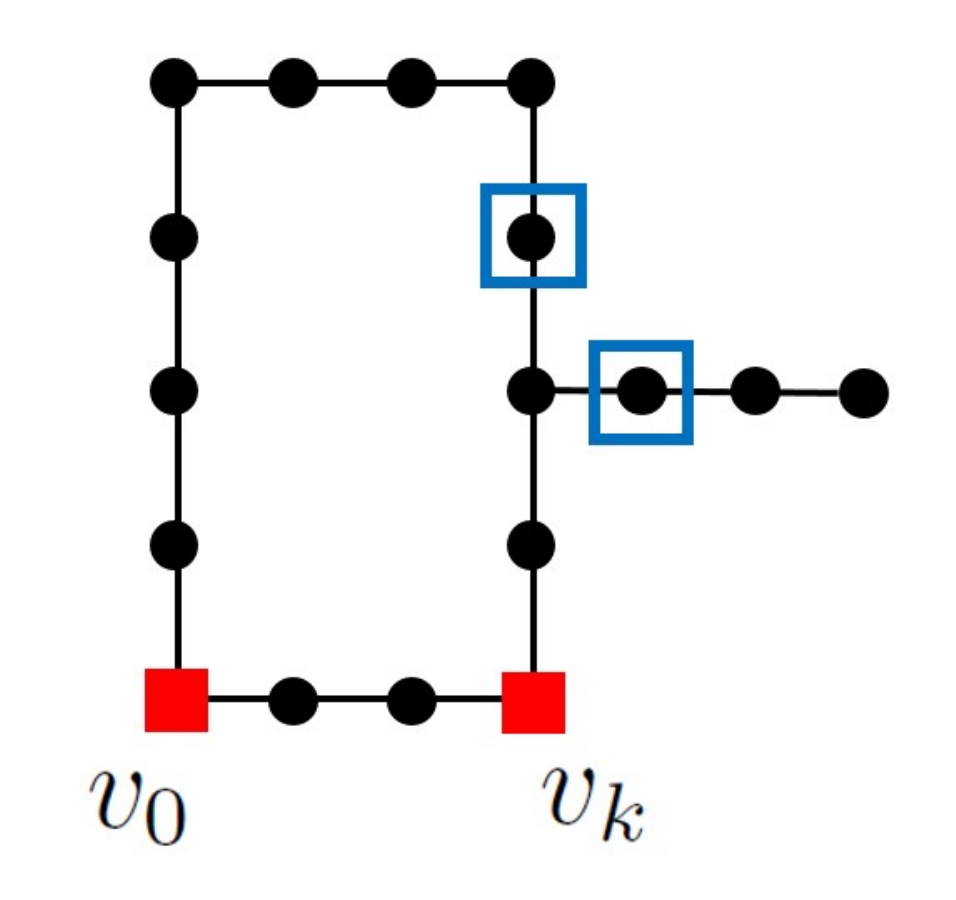}}}\\
\text{c)} &
\text{\raisebox{-1\height}{\includegraphics[scale=0.5]{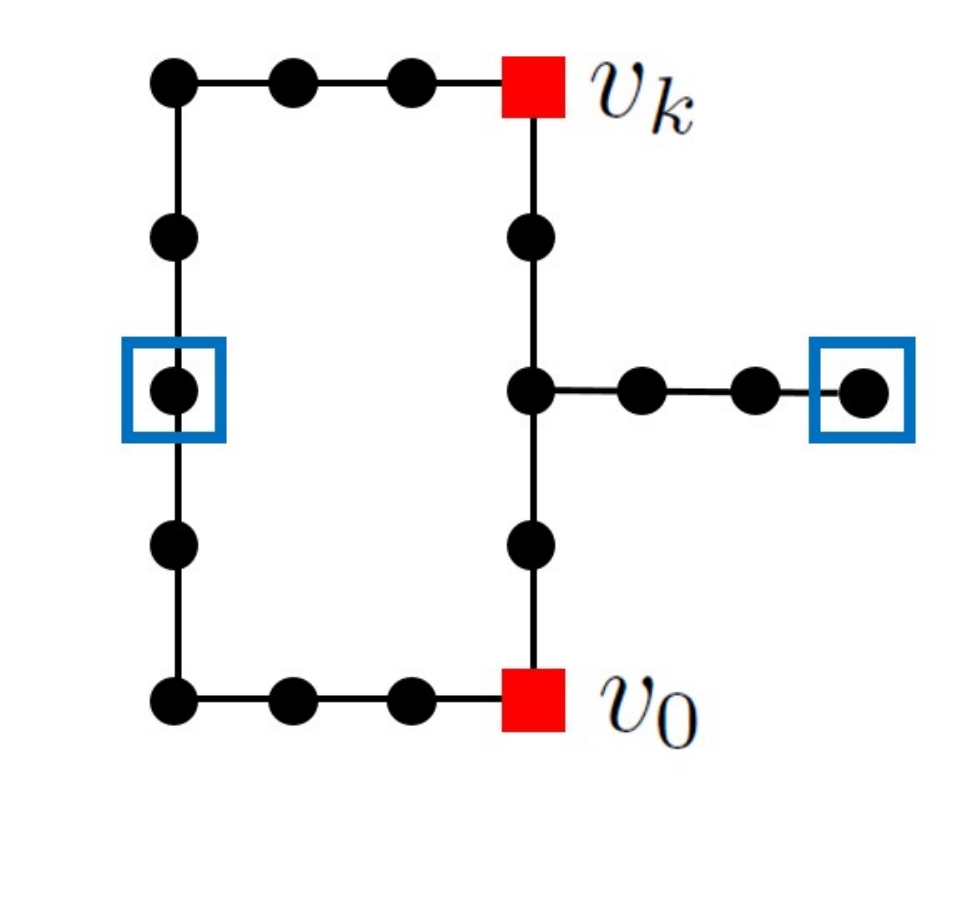}}} &
\text{d)} &
\text{\raisebox{-1\height}{\includegraphics[scale=0.5]{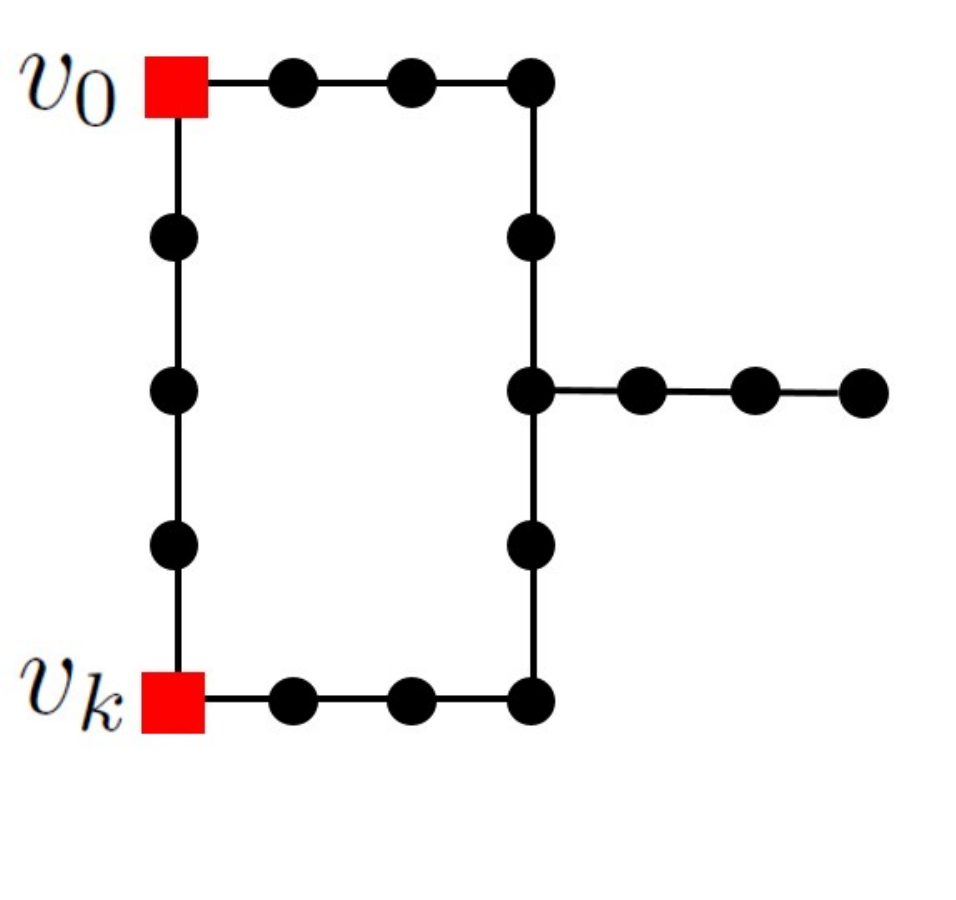}}}%
\end{array}
$
\end{center}
\caption{In all four examples we consider the same graph~$G$ without
branch-active vertices on $C$ and four different smallest biactive
branch-resolving sets $S=\{v_{0},v_{k}\}$ chosen so that the graph $G$ with
respect to $S$ contains: a) configuration $\mathcal{A}$, b) configuration
$\mathcal{B}$, c) configuration $\mathcal{C}$, d) none of the configurations
$\mathcal{A}$, $\mathcal{B}$, and $\mathcal{C}$. In the first three examples
the set $S$ is not a vertex metric generator, so a pair of vertices
non-distinguished by $S$ is marked in $G.$}%
\label{Fig_b01}%
\end{figure}

So far we have determined the exact value of unicyclic graphs $G$ with
$b(G)\geq2$ and our characterization depends on the presence of configurations
$\mathcal{A}$, $\mathcal{B}$, and $\mathcal{C}$ in the graph $G.$ Now we want
to deal with unicyclic graphs $G$ with $b(G)<2.$ The problem with such graphs
is that different biactive sets $S$ impose different $S$-active vertices on
$C,$ and we derive presence of different configurations (see Figure
\ref{Fig_b01}).

We conclude that we cannot speak of presence/absence of a particular
configuration in a graph, unless we have a biactive branch-resolving set $S$.
So, we introduce the following definition to unify both cases $b(G)\geq2$ and
$b(G)<2.$

\begin{definition}
Let $G$ be a unicyclic graph. We say that $G$ is $\mathcal{ABC}$%
\emph{-negative}, if there exists a smallest biactive branch-resolving set $S$
such that $G$ does not contain any of the configurations $\mathcal{A},$
$\mathcal{B},$ and $\mathcal{C}$ with respect to $S.$ Otherwise we say that
$G$ is $\mathcal{ABC}$\emph{-positive}.
\end{definition}

Notice that the above definition extends the case when $G$ contains two or
more branch-active vertices, i.e. $b(G)\geq2$, in which case $G$ will be
$\mathcal{ABC}$-negative if it does not contain any of the configurations
$\mathcal{A},$ $\mathcal{B},$ $\mathcal{C}$, and $G$ is $\mathcal{ABC}%
$-positive if it contains at least one of the configurations $\mathcal{A},$
$\mathcal{B},$ $\mathcal{C}.$

As an example of $\mathcal{ABC}$-negative and $\mathcal{ABC}$-positive graphs
$G$ with $b(G)<2$, we can consider corona product graphs $C_{n}\odot K_{1}$
which are obtained from the cycle $C_{n}$ by appending a leaf to every vertex
in $C_{n}.$ We leave to the reader to verify that if $n$ is odd, then corona
product $C_{n}\odot K_{1}$ is $\mathcal{ABC}$-negative, and for even $n\geq6$
it is $\mathcal{ABC}$-positive (see Figure \ref{Fig_corona}). Corona product
graphs $C_{n}\odot K_{1}$ are examples of unicyclic graphs with $b(G)=0,$ but
if we replace one leaf in $C_{n}\odot K_{1}$ by any acyclic structure, we
obtain a graph with $b(G)=1$ and the same reasoning holds.

\begin{figure}[h]
\begin{center}
$%
\begin{array}
[c]{cccc}%
\text{a)} &
\text{\raisebox{-1\height}{\includegraphics[scale=0.5]{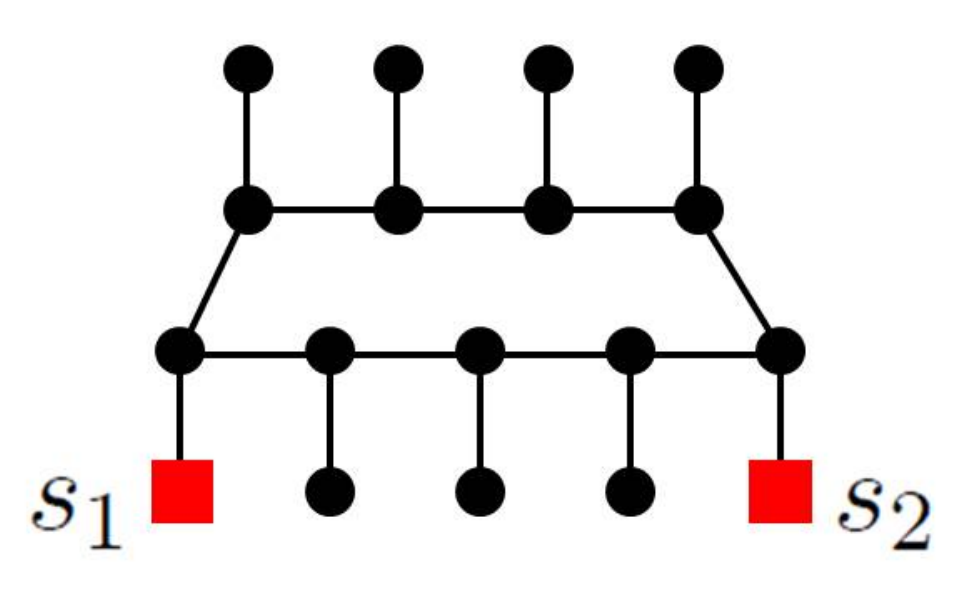}}} &
\text{b)} &
\text{\raisebox{-1\height}{\includegraphics[scale=0.5]{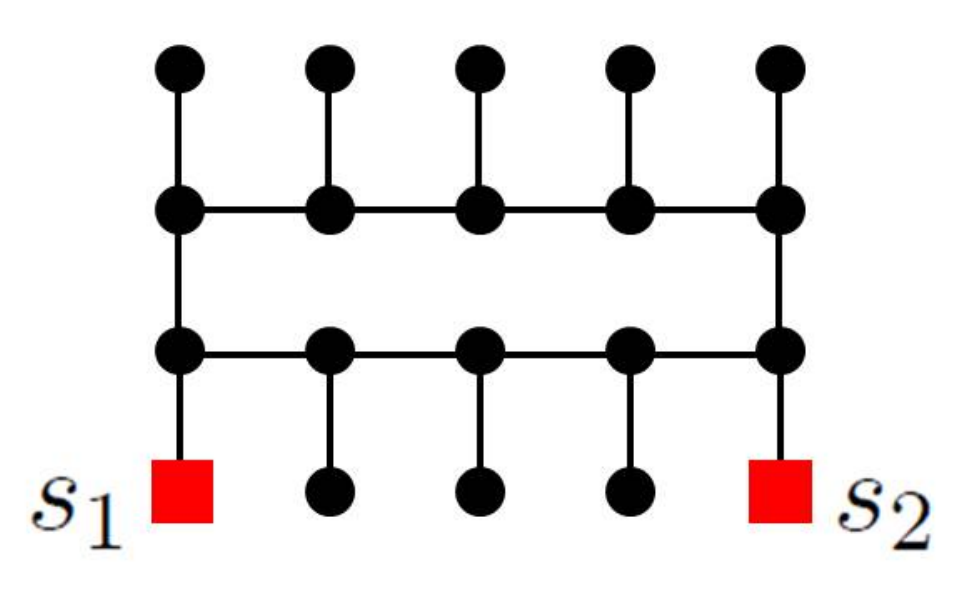}}}%
\end{array}
$
\end{center}
\caption{Figure shows corona product graphs: a) $C_{9}\odot K_{1}$ which is
$\mathcal{ABC}$-negative and $S=\{s_{1},s_{2}\}$ is a vertex metric generator,
b) $C_{10}\odot K_{1}$ which is $\mathcal{ABC}$-positive with $S=\{s_{1}%
,s_{2}\}$ being such that it avoids configurations $\mathcal{A}$ and
$\mathcal{B}$, but does not avoid configuration $\mathcal{C}.$}%
\label{Fig_corona}%
\end{figure}

Now, we can state a more general version of Theorem \ref{Tm_vDim_bc2} which
encapsulates unicyclic graphs with less than two branch-active vertices.

\begin{theorem}
\label{Tm_dim}Let $G$ be a unicyclic graph. Then%
\[
\mathrm{dim}(G)=L(G)+\max\{0,2-b(G)\}+\Delta,
\]
where $\Delta=0$ if the graph $G$ is $\mathcal{ABC}$-negative, and $\Delta=1$
if $G$ is $\mathcal{ABC}$-positive.
\end{theorem}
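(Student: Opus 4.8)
The plan is to reduce the general statement to Theorem \ref{Tm_vDim_bc2} by a careful accounting of the smallest biactive branch-resolving sets, distinguishing the cases $b(G)\ge 2$, $b(G)=1$, and $b(G)=0$. For $b(G)\ge 2$ there is nothing new: every branch-resolving set is automatically biactive, so the smallest biactive branch-resolving set has size $L(G)$, the term $\max\{0,2-b(G)\}$ vanishes, the notions of $\mathcal{ABC}$-positivity and "contains one of $\mathcal{A},\mathcal{B},\mathcal{C}$" coincide by Remark \ref{Obs_b2_notdependS}, and Theorem \ref{Tm_vDim_bc2} gives the result verbatim. So the real content is $b(G)\in\{0,1\}$.

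First I would pin down the size of a smallest biactive branch-resolving set when $b(G)<2$. A smallest branch-resolving set has size $L(G)$ and activates exactly the $b(G)$ branch-active vertices; to make it biactive we must add $2-b(G)$ further vertices placed in components $T_{v_i}$ not yet active (always possible since $g\ge 3$), and one checks no fewer additions suffice because $a_S(C)\ge 2$ is forced. Hence every smallest biactive branch-resolving set has cardinality exactly $L(G)+\max\{0,2-b(G)\}$; call this number $m$. Now by Lemma \ref{Lemma_a(S)vj2} any vertex metric generator contains a biactive branch-resolving set, so $\dim(G)\ge m$, and moreover any vertex metric generator of size $m$ must itself be a smallest biactive branch-resolving set.

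Next I would handle the two directions. If $G$ is $\mathcal{ABC}$-negative, pick a witnessing smallest biactive branch-resolving set $S$ with $|S|=m$ avoiding $\mathcal{A},\mathcal{B},\mathcal{C}$; Lemma \ref{Lm_vertexGeneratorSuf} makes $S$ a vertex metric generator, so $\dim(G)\le m$, giving equality $\dim(G)=m=L(G)+\max\{0,2-b(G)\}$, i.e. $\Delta=0$. If $G$ is $\mathcal{ABC}$-positive, then every smallest biactive branch-resolving set contains one of $\mathcal{A},\mathcal{B},\mathcal{C}$ with respect to itself, hence by Lemma \ref{Lm_vertexGeneratorNec} is not a vertex metric generator; combined with the observation above that a size-$m$ generator would have to be such a set, we get $\dim(G)\ge m+1$. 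For the matching upper bound, take any smallest biactive branch-resolving set $S$, let $v\in V(C)$ form a geodesic triple with two $S$-active vertices on $C$, and set $S'=S\cup\{v\}$; Lemma \ref{Lemma_GeodTrip} shows $S'$ is a vertex metric generator of size $m+1$, so $\dim(G)=m+1$, i.e. $\Delta=1$.

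The main obstacle I expect is the careful bookkeeping in the second step: verifying that $L(G)+\max\{0,2-b(G)\}$ really is the minimum size of a biactive branch-resolving set (in particular that when $b(G)=0$ or $1$ one cannot be cleverer than simply adding $2-b(G)$ arbitrary activating vertices to a minimum branch-resolving set), and propagating the "any metric generator of this minimum size must be a smallest biactive branch-resolving set" claim cleanly enough that $\mathcal{ABC}$-positivity forces the strict inequality. Everything after that is a direct invocation of Lemmas \ref{Lm_vertexGeneratorNec}, \ref{Lm_vertexGeneratorSuf}, and \ref{Lemma_GeodTrip}.
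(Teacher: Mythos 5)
Your proposal is correct and follows essentially the same route as the paper: Lemma \ref{Lm_vertexGeneratorSuf} applied to a witnessing smallest biactive branch-resolving set in the $\mathcal{ABC}$-negative case, and Lemma \ref{Lm_vertexGeneratorNec} plus a geodesic-triple augmentation via Lemma \ref{Lemma_GeodTrip} in the $\mathcal{ABC}$-positive case. The only difference is that you spell out the lower-bound bookkeeping (that every biactive branch-resolving set has size at least $L(G)+\max\{0,2-b(G)\}$ and that a generator of that size would have to be a smallest such set) which the paper leaves implicit, so your write-up is if anything slightly more complete.
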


\begin{proof}
Assume first that $G$ is $\mathcal{ABC}$-negative. This implies that there is
a smallest biactive branch-resolving set $S$ such that $G$ does not contain
any of the configurations $\mathcal{A},$ $\mathcal{B},$ and $\mathcal{C}$ with
respect to $S.$ Then, Lemma \ref{Lm_vertexGeneratorSuf} implies that $S$ is a
vertex metric generator, so $\mathrm{dim}(G)=\left\vert S\right\vert
=L(G)+\max\{0,2-b(G)\}.$

Assume now that $G$ is $\mathcal{ABC}$-positive and let $S$ be a smallest
biactive branch-resolving set in $G.$ Definition of $\mathcal{ABC}$-positivity
implies that $G$ contains at least one of the configurations $\mathcal{A}$,
$\mathcal{B}$, or $\mathcal{C}$ with respect to $S,$ so $S$ is not a vertex
metric generator according to Lemma \ref{Lm_edgeGeneratorNec}. But then, let
$v$ be a vertex from $C$ which forms a geodesic triple with two $S$-active
vertices on $C,$ and let $S^{\prime}=S\cup\{v\}.$ Now, Lemma
\ref{Lemma_GeodTrip} implies that $S^{\prime}$ is a metric generator, so
$\mathrm{dim}(G)=\left\vert S^{\prime}\right\vert =L(G)+\max\{0,2-b(G)\}+1.$
\end{proof}

\section{Edge metric dimension}

Now we want to apply a similar study for the edge metric dimension of
unicyclic graphs. Similarly as for the vertex metric dimension, Lemma
\ref{Lemma_a(S)vj2} implies that a set $S\subseteq V(G)$ which is not a
branch-resolving set or for which $a_{S}(C)<2$ cannot be an edge metric
generator, and Lemma \ref{Lemma_GeodTrip} implies that a branch-resolving set
$S$ with a geodesic triple of $S$-active vertices on the cycle $C$ certainly
is an edge metric generator. Therefore, it remains to consider
branch-resolving sets $S$ with $a_{S}(C)\geq2,$ but without a geodesic triple
of $S$-active vertices. Such a set may or may not be an edge metric generator,
and we want to establish necessary and sufficient conditions for it to be an
edge metric generator.

Let us consider the graphs from Figure \ref{Figure_YesProblem}.\ Notice that
in the graph $G$ from a), which contains configuration $\mathcal{A}$, there is
a pair of edges incident to $v_{0}$ which is not distinguished by $S$.
Similarly holds for graphs in b) and c) which contain configuration
$\mathcal{B}.$ Moreover, in the example c) where the cycle is odd there will
be a pair of undistinguished edges even if the $S$-free thread is hanging at
$v_{i}$ for $i=\left\lceil g/2\right\rceil -1$ or $i=\left\lfloor
g/2\right\rfloor +k+1.$ So, in the case of edge metric dimension configuration
$\mathcal{B}$ will have to be extended to a new configuration $\mathcal{D}.$
Finally, in the graph $G$ from d) there is no pair of undistinguished edges,
so configuration $\mathcal{C}$ is not an obstacle for $S$ to be an edge metric
generator, but there is a counterpart configuration for the edge metric
dimension which will be configuration $\mathcal{E}$.

\begin{definition}
Let $G$ be a unicyclic graph and let $S$ be a biactive branch-resolving set
$S$ in $G$. We say that the graph $G$ with respect to $S$ \emph{contains} configuration:

\begin{description}
\item $\mathcal{D}$. If $k\leq\left\lceil g/2\right\rceil -1$ and there is an
$S$-free thread hanging at a vertex $v_{i}$ for some $i\in\lbrack
k,\left\lceil g/2\right\rceil -1]\cup\lbrack\left\lfloor g/2\right\rfloor
+k+1,g-1]\cup\{0\}$;

\item {$\mathcal{E}$}. If $a_{S}(C)=2$ and there is an $S$-free thread of the
length $\geq\left\lfloor g/2\right\rfloor -k+1$ hanging at a vertex $v_{i}$
with $i\in\lbrack0,k].$ Moreover, if $g$ is even, an $S$-free thread must be
hanging at the vertex $v_{j}$ with $j=g/2+k-i$.
\end{description}
\end{definition}

Notice that if $G$ contains configuration $\mathcal{B},$ then it certainly
contains configuration $\mathcal{D}$, but the oposite does not hold. As for
configuration $\mathcal{E}$, notice that for odd $g$ we encounter
configuration $\mathcal{E}$ just with a thread hanging at $v_{i}$ (as $j$ is
not integer anyway). Configuration $\mathcal{E}$ is illustrated by Figure
\ref{Figure_YesProblemEdge}, where a pair of undistinguished edges is marked
and we will show that the same holds generally, i.e. that configuration
$\mathcal{E}$ is an obstacle for set $S$ to be an edge metric generator.

\begin{figure}[h]
\begin{center}
$%
\begin{array}
[c]{cccc}%
\text{a)} &
\text{\raisebox{-1\height}{\includegraphics[scale=0.5]{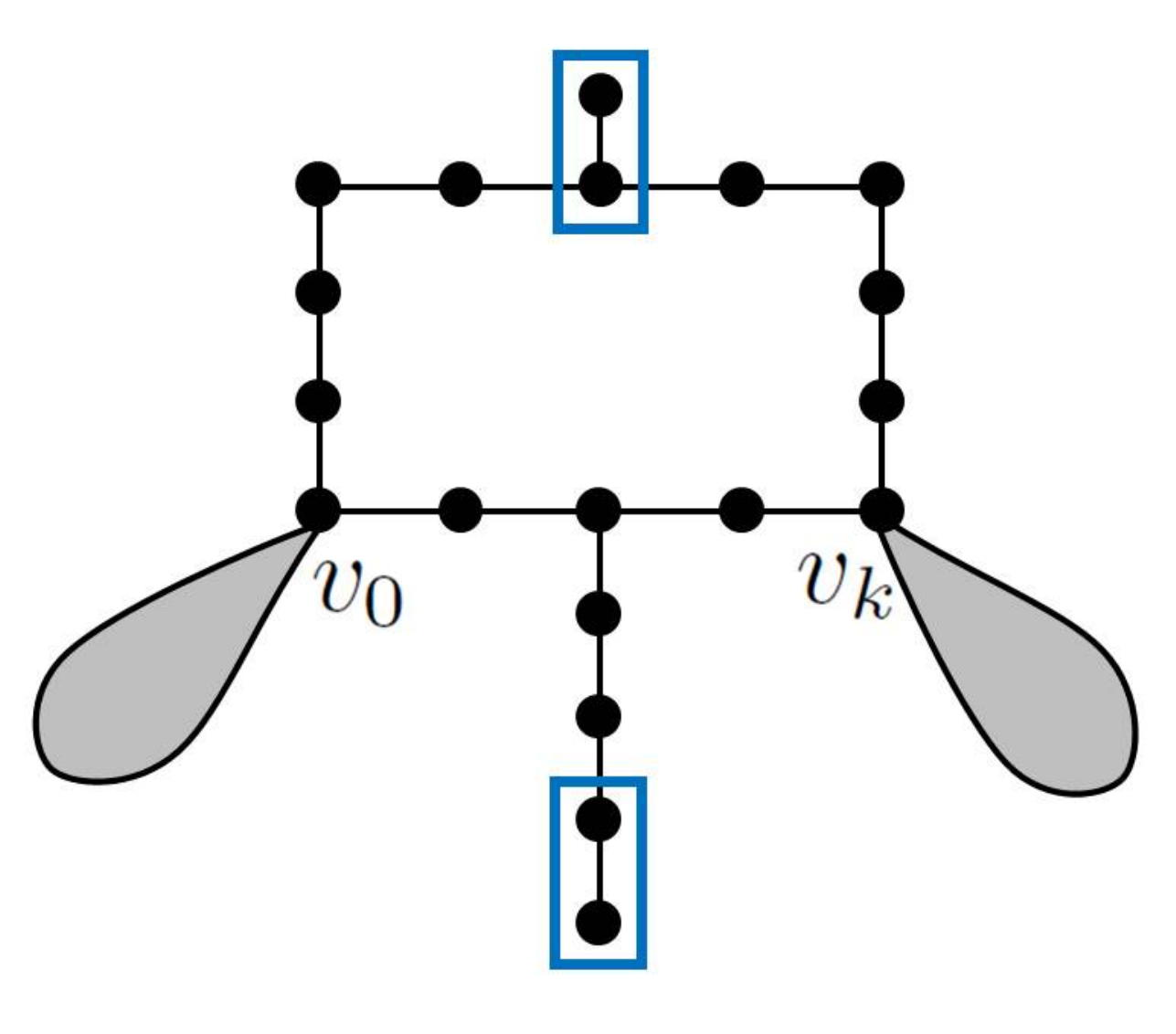}}} &
\text{b)} &
\text{\raisebox{-1\height}{\includegraphics[scale=0.5]{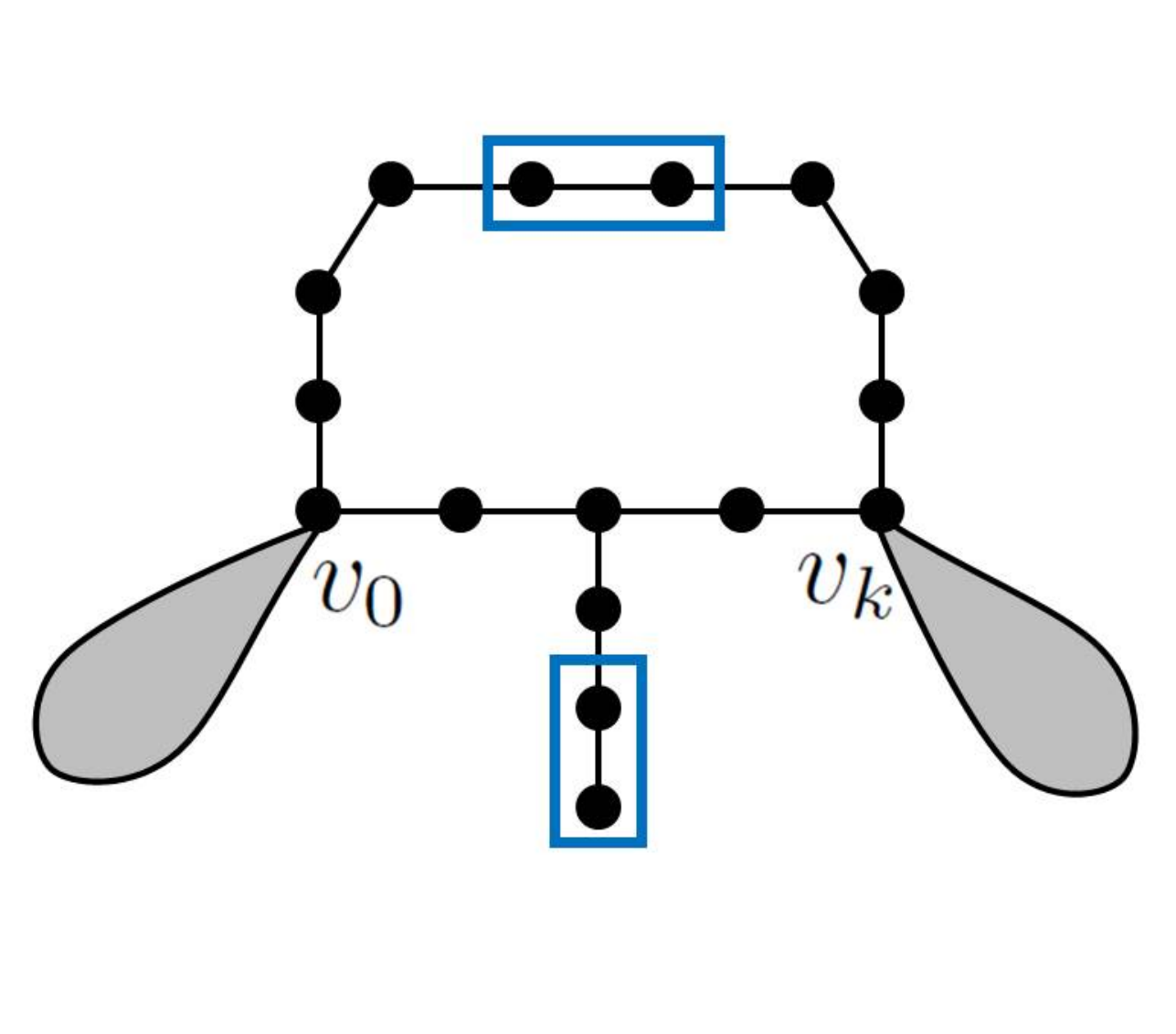}}}%
\end{array}
$
\end{center}
\caption{In both examples we consider a branch-resolving set $S,$ where
$v_{0}$ and $v_{k}$ are the only two $S$-active vertices on $C.$ Configuration
$\mathcal{E}$ with respect to $S$ is shown: a) for an even cycle, b) for an
odd cycle. In both graphs a pair of edges is marked which is not distinguished
by $S.$}%
\label{Figure_YesProblemEdge}%
\end{figure}

\begin{lemma}
\label{Lm_edgeGeneratorNec}Let $G$ be a unicyclic graph and let $S$ be a
biactive branch-resolving set in $G$. If the graph $G$ contains configuration
$\mathcal{A}$, $\mathcal{D}$, or $\mathcal{E}$ with respect to $S,$ then the
set $S$ is not an edge metric generator in $G$.
\end{lemma}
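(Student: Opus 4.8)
The plan is to mirror the proof of Lemma~\ref{Lm_vertexGeneratorNec}: assume $G$ contains one of $\mathcal{A}$, $\mathcal{D}$, or $\mathcal{E}$ with respect to $S$, and in each case exhibit a pair of edges $e,e'\in E(G)$ that no vertex of $S$ distinguishes. Since every vertex of $S$ lies in some component $T_{v_t}$ with $v_t$ an $S$-active vertex on $C$, and by the canonical labelling all $S$-active vertices lie among $v_0,\dots,v_k$, it suffices to show that for each such $t$ the distances $d(e,s)$ and $d(e',s)$ coincide for all $s\in S\cap V(T_{v_t})$; in practice this reduces to checking that $d(e,v_t)=d(e',v_t)$ for the relevant $S$-active vertices $v_t$ (most crucially $v_0$ and $v_k$), because any $s\in S\cap V(T_{v_t})$ reaches both $e$ and $e'$ through $v_t$.

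For configuration $\mathcal{A}$ ($a_S(C)=2$, $g$ even, $k=g/2$), take $e=e_0=v_0v_1$ and $e'=e_{g-1}=v_{g-1}v_0$, the two cycle edges incident to $v_0$. Then $d(e,v_0)=d(e',v_0)=0$, and $d(e,v_k)=d(e',v_k)=k-1=g/2-1$ by symmetry of the two arcs of the even cycle; since $v_0,v_k$ are the only $S$-active vertices, $S$ does not distinguish $e$ and $e'$. For configuration $\mathcal{D}$, let $v_i$ carry an $S$-free thread with neighbour $w$ on it, and set $e=v_iw$. If $i\in[k,\lceil g/2\rceil-1]$ put $e'=e_i=v_iv_{i+1}$; if $i\in[\lfloor g/2\rfloor+k+1,g-1]\cup\{0\}$ put $e'=e_{i-1}=v_{i-1}v_i$. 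In either case $d(e,v_i)=d(e',v_i)=0$, and one checks that for the $S$-active vertices $v_0,\dots,v_k$ the shortest path to $w$ and to the chosen neighbouring cycle vertex have equal length — this is exactly where the extended range (using $\lceil g/2\rceil-1$ and $\lfloor g/2\rfloor+k+1$ rather than the floor/ceiling values of $\mathcal{B}$) is needed, because $d(e,\cdot)$ takes a $\min$ over the endpoints and the extra edge $v_iw$ can ``tie'' a cycle edge one step further than a vertex can. Finally for $\mathcal{E}$, let $v_i$ ($i\in[0,k]$) carry an $S$-free thread of length $\ge\lfloor g/2\rfloor-k+1$, let $x$ be the thread vertex at distance $\lfloor g/2\rfloor-k+1$ from $v_i$ and $y$ its neighbour closer to $v_i$, set $e=xy$; on the other side set $e'=e_{j-1}$ or $e_j$ with $j=\lfloor g/2\rfloor+k-i$ (when $g$ is odd) resp.\ $j=g/2+k-i$ (when $g$ is even, using the $S$-free thread hanging at $v_j$ guaranteed by the definition, exactly paralleling the vertex argument in Case~3 and the proof of Lemma~\ref{Lm_vertexGeneratorNec}).

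In each case the verification is a routine distance computation on the cycle plus the appended threads, using that $d(f,s)=\min\{d(u,s),d(u',s)\}$ for $f=uu'$. The single point requiring care — the main obstacle — is getting the index arithmetic exactly right so that $d(e,v_0)=d(e',v_0)$ \emph{and} $d(e,v_k)=d(e',v_k)$ simultaneously, and confirming that no \emph{third} $S$-active vertex $v_l$ with $0<l<k$ (present when $a_S(C)\ge 3$, which is only relevant for $\mathcal{D}$) can break the tie; this is handled as in Case~3 of Lemma~\ref{Lm_vertexGeneratorSuf} by noting that such a $v_l$ lies on one of the two geodesic arcs and hence still sees both edges through the same side. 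The parity split (odd $g$ versus even $g$) has to be tracked throughout, particularly in $\mathcal{E}$ where for even $g$ the definition forces the symmetric $S$-free thread at $v_j$, which is precisely what makes the pair of edges on the far side available.
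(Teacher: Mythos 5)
Your overall approach is exactly the paper's: for each of $\mathcal{A}$, $\mathcal{D}$, $\mathcal{E}$ you exhibit an explicit pair of edges tied at every $S$-active vertex, and your pairs for $\mathcal{A}$ and $\mathcal{D}$ coincide with the ones the paper uses (the paper leaves the choice between $wv_i,v_iv_{i+1}$ and $wv_i,v_iv_{i-1}$ implicit; your split by the range of $i$ is the right one). Your choice of $e$ in configuration $\mathcal{E}$ (the thread edge with $d(e,v_i)=\lfloor g/2\rfloor-k$) and the odd-$g$ partner $e_j$ with $j=\lfloor g/2\rfloor+k-i$ also match the paper.

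The one place where the proof as literally written would fail is the even-$g$ subcase of $\mathcal{E}$. There you set $e'=e_{j-1}$ or $e_j$ with $j=g/2+k-i$, i.e., a cycle edge incident to $v_j$. Neither works: with $d(e,v_0)=g/2-(k-i)$ and $d(e,v_k)=g/2-i$, the edge $e_j=v_jv_{j+1}$ has $d(e_j,v_0)=g/2-(k-i)-1$ because its endpoint $v_{j+1}$ is one step closer to $v_0$ along the other arc, and symmetrically $e_{j-1}=v_{j-1}v_j$ has $d(e_{j-1},v_k)=g/2-i-1$; so each cycle edge breaks the tie at one of $v_0,v_k$. The correct partner --- and the very reason the definition of $\mathcal{E}$ demands an $S$-free thread at $v_j$ precisely when $g$ is even --- is the \emph{first edge of that thread}, both of whose endpoints are at distance at least $d(v_j,\cdot)$ from every $S$-active vertex, so it ties with $e$ at both $v_0$ and $v_k$. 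You gesture at this (``using the $S$-free thread hanging at $v_j$''), but the edge you actually name is a cycle edge; replace it by the thread edge incident to $v_j$ and the argument goes through as in the paper.
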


\begin{proof}
Let us assume that $G$ contains configuration $\mathcal{A}$, $\mathcal{D}$, or
$\mathcal{E}$ with respect to $S$, and it is sufficient to find a pair of
edges $x,x^{\prime}\in E(G)$ which are not distinguished by $S$.

If $G$ contains configuration $\mathcal{A}$, then edges $v_{0}v_{1}$ and
$v_{0}v_{g-1}$ are not distinguished by $S$.

Next, if $G$ contains configuration $\mathcal{D}$, let $v_{i}$ be the
"problematic" vertex on $C$ with an $S$-free thread hanging at it and let $w$
be the neighbour of $v_{i}$ on that thread. Then either pair of edges $wv_{i}$
and $v_{i}v_{i+1}$ or the pair $wv_{i}$ and $v_{i}v_{i-1}$ are not
distinguished by $S$.

Finally, assume that $G$ contains configuration $\mathcal{E}$. By definition
this implies that $a_{S}(C)=2$ and there is an $S$-free thread hanging at
$v_{i}$ for $i\in\lbrack0,k]$ of the length $\geq\left\lfloor g/2\right\rfloor
-k+1$ and also an $S$-free thread hanging at $v_{j}$ for $j=g/2+k-i$ (if $g$
is even). Let $e$ be an edge in $T_{v_{i}}$ such that $d(e,v_{i})=\left\lfloor
g/2\right\rfloor -k$, note that such an edge must exist due to the fact that
the thread attached to $v_{i}$ is of length $\geq\left\lfloor g/2\right\rfloor
-k+1$. If $g$ is even, then $v_{j}$ has an $S$-free thread attached, so let
$e^{\prime}$ be the first edge on that thread (i.e. $e^{\prime}$ is incident
to $v_{j}$). Then $e$ and $e^{\prime}$ are not distinguished by $S$. If $g$ is
odd, let $e^{\prime}=v_{\left\lfloor j\right\rfloor }v_{\left\lfloor
j\right\rfloor +1}$. Then again $e$ and $e^{\prime}$ are not distinguished by
$S$.
\end{proof}

So far we have shown that configurations $\mathcal{A}$, $\mathcal{D}$, and
$\mathcal{E}$ are indeed the obstacle for $S$ to be an edge metric generator.
Next, we show that these are the only obstacles.

\begin{lemma}
\label{Lm_edgeGeneratorSuf}Let $G$ be a unicyclic graph, and let $S$ be a
biactive branch-resolving set in $G$. If the graph $G$ does not contain any of
the configurations $\mathcal{A}$, $\mathcal{D}$, and $\mathcal{E}$ with
respect to $S,$ then the set $S$ is an edge metric generator in $G$.
\end{lemma}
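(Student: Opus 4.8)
The plan is to mirror the structure of the proof of Lemma~\ref{Lm_vertexGeneratorSuf}, but tracking pairs of edges rather than pairs of vertices. Assume $G$ does not contain configuration $\mathcal{A}$, $\mathcal{D}$, or $\mathcal{E}$ with respect to $S$, and suppose for contradiction that $S$ is not an edge metric generator, so there is a pair of edges $x,x'$ not distinguished by $S$. By Lemma~\ref{Lemma_GeodTrip} there is no geodesic triple of $S$-active vertices, so the canonical labelling gives $k\le g/2$; and if $k=g/2$ (so $g$ even), then $a_S(C)\ge 3$ yields a geodesic triple (contradiction) while $a_S(C)=2$ gives configuration $\mathcal{A}$ (contradiction). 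So we may assume $k<g/2$. By Lemma~\ref{Lemma_SameComponent}, $x$ and $x'$ cannot both lie in the same component $T_{v_m}$ of $G-E(C)$; the remaining case is when one of them, say one of the edges, is a cycle edge $e_m=v_mv_{m+1}$, which we treat via $d(e_m,s)=\min\{d(v_m,s),d(v_{m+1},s)\}$ — effectively $e_m$ behaves like a ``vertex'' sitting at distance-fractions around the cycle. So write $x$ as an edge ``rooted'' in $T_{v_i}$ (meaning $d(x,s)=d(x,v_i)+d(v_i,s)$ for $s\in S\setminus V(T_{v_i})$, where $d(x,v_i)$ may be a half-integer when $x=e_i$) and similarly $x'$ rooted in $T_{v_j}$, and WLOG $i\le j$.

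I would then run through the same five cases on $(i,j)$ as in Lemma~\ref{Lm_vertexGeneratorSuf}:
\begin{description}
\item Case $i,j\in[0,k]$: if $S\cap V(T_{v_0})$ fails to distinguish $x,x'$ then $d(x,v_i)\ge d(x',v_j)$ with $i\le j$, forcing $d(x,s)>d(x',s)$ for $s\in S\cap V(T_{v_k})$.
\item Case $i,j\in[k+1,g-1]$: absence of $\mathcal{D}$ forces $\lceil g/2\rceil\le i\le j\le\lfloor g/2\rfloor+k$ (the $\mathcal{D}$-bound $\lceil g/2\rceil-1$ replaces the $\mathcal{B}$-bound $\lfloor g/2\rfloor-1$ here, which is exactly the point of introducing $\mathcal{D}$), and then a comparison via $v_0$ and $v_k$ as in the vertex case rules it out.
\item Case $i\in[1,k-1]$, $j\in[k+1,g-1]$: reduce as before to $j\in[\lceil g/2\rceil+1,\lfloor g/2\rfloor+k-1]$ (or the edge-analogue), sum the two ``non-distinguishing'' equalities for $v_0$ and $v_k$ to get $d(x,v_i)-d(x',v_j)=g/2-k>0$; if $a_S(C)\ge 3$ an intermediate active vertex distinguishes, and if $a_S(C)=2$ the long $S$-free thread at $v_i$ together with the forced thread at $v_j$ gives configuration $\mathcal{E}$ — here the half-integer bookkeeping is what turns the $\mathcal{C}$-threshold $g/2-k$ into the $\mathcal{E}$-threshold $\lfloor g/2\rfloor-k+1$, and the ``$g$ even $\Rightarrow$ thread at $v_j$'' clause of $\mathcal{E}$ is exactly what is needed.
\item Cases $i=k$, $j>k$ and $i=0$, $j>k$: as in Lemma~\ref{Lm_vertexGeneratorSuf}, locate a witness $s\in S\cap V(T_{v_k})$ (resp. $V(T_{v_0})$) off an $S$-free thread, use absence of $\mathcal{D}$ to ensure such an $s$ exists, and push the distance identities analogous to \eqref{Eq1}--\eqref{Eq2} through, the $j\le\lfloor g/2\rfloor$ vs. $j>\lfloor g/2\rfloor$ split handled by $v_0$ (resp. $v_k$).
\end{description}

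The main obstacle I expect is the careful treatment of cycle edges $e_m$ and of edges on threads at half-integer distances: every distance appearing in the vertex proof must be reinterpreted as possibly a half-integer, and one must check that each inequality used (strict vs. non-strict) still has the right direction, and in particular that ``$d(e,v_i)\ge\lfloor g/2\rfloor-k$'' is the correct integer threshold when $x$ is an edge at the tip of a thread. The parity discussion in Case~2 (where in the vertex proof odd $g$ with $v_i,v_j$ antipodal to $v_0$ was the delicate sub-case) becomes slightly more subtle for edges, since an edge can be ``antipodal'' to a vertex in a weaker sense; I would handle this by noting that an edge $e_m$ and its reflection cannot simultaneously be equidistant-from-$v_0$ and equidistant-from-$v_k$ when $k\ne 0$, just as in the vertex argument. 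I would also explicitly verify at the start that configuration $\mathcal{C}$ genuinely causes no obstruction for edges (as asserted in the text preceding the definition of $\mathcal{D}$, $\mathcal{E}$) so that replacing $(\mathcal{B},\mathcal{C})$ by $(\mathcal{D},\mathcal{E})$ is legitimate. Once all five cases close, every pair of edges is distinguished by $S$, so $S$ is an edge metric generator, completing the proof.
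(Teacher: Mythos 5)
Your plan is sound and would very likely yield a correct proof, but it takes a genuinely different route from the paper. You re-run the five-case analysis of Lemma~\ref{Lm_vertexGeneratorSuf} directly on pairs of edges, modelling a cycle edge $e_m$ as a point at the half-integer position $m+1/2$. The paper instead contracts the two undistinguished edges $e,e'$ to get a graph $G_1$ in which $d_1(s,x)=d(s,e)$ and $d_1(s,x')=d(s,e')$ for the relevant $s\in S$, and then simply invokes Lemma~\ref{Lm_vertexGeneratorSuf} on $G_1$: the contracted graph must contain $\mathcal{A}$, $\mathcal{B}$ or $\mathcal{C}$, and each of these translates back to $\mathcal{A}$, $\mathcal{D}$ or $\mathcal{E}$ in $G$. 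The paper's three cases are $e,e'\in E(C)$; $e,e'\notin E(C)$ (where the cycle length satisfies $g_1=g$); and exactly one of $e,e'$ in $E(C)$ (where $g_1=g-1$). That last shift is precisely where the odd-$g$ clause of $\mathcal{E}$ comes from: $g$ odd makes $g_1$ even, so $\mathcal{A}$ or $\mathcal{C}$ can appear in $G_1$ and pull back to $\mathcal{E}$ on an odd cycle. In other words, the contraction buys exactly what you identify as your main obstacle --- the parity flip and the threshold $\lfloor g/2\rfloor-k+1$ fall out automatically instead of being tracked by half-integer bookkeeping. Your direct approach avoids introducing $G_1$ but costs more case-checking, and one point in your sketch needs repair: in your case $i,j\in[k+1,g-1]$, the claim that absence of $\mathcal{D}$ forces $\lceil g/2\rceil\le i\le j\le\lfloor g/2\rfloor+k$ applies only to edges lying in trees $T_{v_i}$, since configuration $\mathcal{D}$ says nothing about where cycle edges sit; pairs involving a cycle edge on the far arc must instead be handled by the direct $v_0$/$v_k$ comparison (they do work out: two distinct cycle edges are always distinguished once $k<g/2$, and a cycle edge against a tree edge reduces to the sum-of-distances computation whose only solution is the odd-cycle instance of $\mathcal{E}$). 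With that sub-case filled in and the boundary positions $e_k$ and $e_{g-1}$ assigned to the correct case, your five cases close and the argument is complete.
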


\begin{proof}
Suppose that the graph $G$ does not contain any of the configurations
$\mathcal{A}$, $\mathcal{D},$ and $\mathcal{E}$ with respect to $S.$ Let us
suppose the contrary to the claim, i.e. that $S$ is not an edge metric
generator. Let $e=xy$ and $e^{\prime}=x^{\prime}y^{\prime}$ be two edges in
$G$ which are not distinguished by $S$. If $e$ (resp. $e^{\prime}$) is not an
edge of $C,$ then assume that $x$ (resp. $x^{\prime}$) is closer to $C$ than
$y$ (resp. $y^{\prime}$). Also, let $G_{1}=G/\{e,e^{\prime}\}$ and let the
vertex of $G_{1}$ obtained by contracting $e$ (resp. $e^{\prime}$) be denoted
by $x$ (resp. $x^{\prime}$). Denote by $d_{1}(u,v)$ the distance of vertices
$u$ and $v$ in $G_{1}$. The length of the cycle in $G_{1}$ will be denoted by
$g_{1}$. Now we consider the following three cases.

\medskip\noindent\textbf{Case 1:} $e,e^{\prime}\in E(C)$. Edges $e$ and
$e^{\prime}$ are not distinguished by $S$ only if $g$ is even and $a_{S}%
(C)=2$, where the only two $S$-active vertices are an antipodal pair, which
implies $k=g/2$. Hence, we infer that $G$ contains configuration $\mathcal{A}$
which is a contradiction.

\medskip\noindent\textbf{Case 2:} $e,e^{\prime}\not \in E(C)$. Let $e\in
E(T_{v_{i}})$ and $e^{\prime}\in E(T_{v_{j}})$. Lemma
\ref{Lemma_SameComponent} then implies $i\not =j$, where without loss of
generality we may assume $i<j$. Let $G_{y},G_{y^{\prime}}$ and $G_{x}$ be the
connected components of $G-\{e,e^{\prime}\}$ that contains vertices $y$,
$y^{\prime}$ and $x$ respectively. If there is a vertex $s\in S\cap
V(G_{y}\cup G_{y^{\prime}})$, then obviously $s$ distinguishes $e$ and
$e^{\prime}$. So, let us assume $S\subseteq V(G_{x})$ which implies
$d(s,e)=d(s,x)=d_{1}(s,x)$ and $d(s,e^{\prime})=d(s,x^{\prime})=d_{1}%
(s,x^{\prime})$. Hence, $e$ and $e^{\prime}$ are distinguished by $S$ in $G$
if and only if $x$ and $x^{\prime}$ are distinguished by $S$ in $G_{1}$.

As we assumed that $e$ and $e^{\prime}$ are not distinguished by $S$ in $G,$
Lemma \ref{Lm_vertexGeneratorSuf} implies that $G_{1}$ contains configurations
$\mathcal{A},$ $\mathcal{B}$ or $\mathcal{C}$. If $G_{1}$ contains
configuration $\mathcal{A}$ (resp. $\mathcal{B}$), then $G$ obviously contains
configuration $\mathcal{A}$ (resp. $\mathcal{D}$), which is contradiction. On
the other hand, if $x$ and $x^{\prime}$ are not distinguished by $S$ in
$G_{1}$ due to configuration $\mathcal{C},$ then $g=g_{1}$ is even and
$a_{S}(C)=2$. Also, from $i<j$ we infer that $x$ belongs to an $S$-free thread
hanging at a vertex $v_{i}$ for some $i\in\lbrack0,k]$ and $d(x,v_{i})\geq
g/2-k.$ Moreover, since $x$ and $x^{\prime}$ are not distinguished by $S$,
then $x^{\prime}$ must belong to $T_{v_{j}}$ for $j=g/2+k-i.$ Given the fact
that in $G$ there is an edge $e$ and $e^{\prime}$ attached to vertices $x$ and
$x^{\prime}$ respectively, this implies that $G$ contains configuration
$\mathcal{E}$, a contradiction.

\medskip\noindent\textbf{Case 3:} $e\in E(C)$ \textit{and} $e^{\prime
}\not \in E(C)$. Suppose $e=e_{i}=v_{i}v_{i+1}$ and $e^{\prime}\in E(T_{v_{j}%
})$. Let $G_{x^{\prime}}$ and $G_{y^{\prime}}$ be the connected components of
$G-e^{\prime}$ containing vertices $x^{\prime}$ and $y^{\prime}$ respectively.
If there is a vertex $s\in S\cap V(G_{y^{\prime}})$, then $e$ and $e^{\prime}$
would be distinguished by $s$. Therefore assume $S\subseteq V(G_{x^{\prime}}%
)$. If there is a vertex $s\in S$ such that the shortest path connecting
vertices $x^{\prime}$ and $s$ contains $e$, then $e$ and $e^{\prime}$ would
obviously be distinguished by $S$. Therefore, assume that no path from
$x^{\prime}$ to vertices from $S$ contains the edge $e$.

If $e$ and $e^{\prime}$ are incident, say $x^{\prime}=v_{i},$ then $e$ and
$e^{\prime}$ are not distinguished by $S$ only if $i\in\lbrack k,\left\lceil
g/2\right\rceil -1].$ Since $e^{\prime}\not \in E(C),$ this implies that $G$
contains configuration $\mathcal{D}$. So, let us assume that $e$ and
$e^{\prime}$ are not incident which implies $x\not =x^{\prime}$ in $G_{1}.$
Since no path from $x^{\prime}$ to vertices from $S$ contains the edge $e,$ we
conclude that $d(e,s)=d_{1}(x,s)$ and $d(e^{\prime},s)=d_{1}(x^{\prime},s)$
for every $s\in S$. As we assumed that $e$ and $e^{\prime}$ are not
distinguished by $S$ in $G,$ it follows that $x$ and $x^{\prime}$ are not
distinguished by $S$ in $G_{1},$ so according to Lemma
\ref{Lm_vertexGeneratorSuf} the graph $G_{1}$ contains configuration
$\mathcal{A},$ $\mathcal{B},$ or $\mathcal{C}.$ Notice that in this case
$g_{1}=g-1$.

Similarly as in the previous case, if $G_{1}$ contains configuration
$\mathcal{B}$, then $G$ contains configuration $\mathcal{D}$, which is
contradiction. If $G_{1}$ contains configuration $\mathcal{A},$ then $g_{1}$
is even, so $g$ is odd and $k=\left\lfloor g/2\right\rfloor $. Also, as $e\in
E(C)$ and $e^{\prime}\not \in E(C)$, it must hold $i\in\lbrack k,g-1]$ and
$j\in\lbrack0,k].$ Since $x^{\prime}\in T_{v_{j}}$ has an edge $e^{\prime
}\not \in E(C)$ attached to it, this implies there is an $S$-free thread
hanging at $v_{j}$ of the length $\geq1=\left\lfloor g/2\right\rfloor -k+1.$
Therefore, $G$ contains configuration $\mathcal{E}$ on odd cycle, a contradiction.

Finally, let us assume $x$ and $x^{\prime}$ are not distinguished by $S$ in
$G_{1}$ due to configuration $\mathcal{C}.$ This implies $g_{1}$ is even and
$x^{\prime}$ belongs to an $S$-free thread hanging at $v_{j}$ for $j\in
\lbrack0,k]$ and $d(x^{\prime},v_{j})\geq g/2-k.$ Since there is an edge
$e^{\prime}$ hanging at $x^{\prime}$ in $G,$ this implies $G$ contains
configuration $\mathcal{E}$ on odd cycle.

Altogether, we conclude that any two distinct edges are distinguished by $S,$
which implies that $S$ is an edge metric generator.
\end{proof}

In the previous two lemmas we have established the necessary and sufficient
condition for a set of vertices to be an edge metric generator, so now we can
proceed with determining the exact value of the edge metric dimension of a
unicyclic graph $G.$ Notice that configuration $\mathcal{D}$ and $\mathcal{E}%
$, similarly as configurations $\mathcal{A}$, $\mathcal{B}$, and $\mathcal{C}%
$, depend only on the position of $S$-active vertices on $C.$ Therefore,
Observation \ref{Obs_b2_notdependS} holds also for configurations
$\mathcal{D}$ and $\mathcal{E}$ and we can say that unicyclic graphs with
$b(G)\geq2$ contain configuration $\mathcal{D}$ or $\mathcal{E}$ without
explicitely stating the set $S$. So, for unicyclic graphs with $b(G)\geq2$ we
can state the theorem which gives the edge metric dimension as follows.

\begin{theorem}
\label{Tm_eDim_bc2}Let $G$ be a unicyclic graph with at least two
branch-active vertices. Then%
\[
\mathrm{dim}(G)=L(G)+\Delta_{e},
\]
where $\Delta_{e}=0$ if the graph $G$ does not contain any of the
configurations $\mathcal{A}$, $\mathcal{D}$, or $\mathcal{E},$ and $\Delta
_{e}=1$ otherwise.
\end{theorem}

\begin{proof}
The proof is analogous to the proof of Theorem \ref{Tm_vDim_bc2}, using
configurations $\mathcal{D}$ and $\mathcal{E}$ instead of configurations
$\mathcal{B}$ and $\mathcal{C}$ respectively. Also, Lemmas
\ref{Lm_edgeGeneratorNec} and \ref{Lm_edgeGeneratorSuf} need to be applied
instead of Lemmas \ref{Lm_vertexGeneratorNec} and \ref{Lm_vertexGeneratorSuf}, respectively.
\end{proof}

Finally, if a unicyclic graph $G$ contains less than two branch-active
vertices on $C,$ then a smallest branch-resolving set $S$ is not biactive and
needs to be introduced $2-b(G)$ vertices to become biactive, the consequence
of which is that different smallest \emph{biactive} branch-resolving sets $S$
may have different set of $S$-active vertices on $C.$ Since the presence of
configurations $\mathcal{D}$ and $\mathcal{E}$ depends on the position of
$S$-active vertices on $C,$ this implies that a unicyclic graph $G$ with
$b(G)<2$ does contain configuration $\mathcal{D}$ or $\mathcal{E}$ with
respect to one smallest biactive branch-resolving set, but not with respect to
another. Since we need a smallest biactive branch-resolving set such that $G$
does not contain any of the configurations $\mathcal{A}$, $\mathcal{D}$, and
$\mathcal{E}$ with respect to $S,$ we introduce the following definition.

\begin{definition}
We say that a unicyclic graph $G$ is $\mathcal{ADE}$\emph{-negative}, if there
is a smallest biactive branch-resolving set $S$ such that $G$ does not contain
any of the configurations $\mathcal{A}$, $\mathcal{D}$, and $\mathcal{E}$ with
respect to $S.$ Otherwise, we say that $G$ is $\mathcal{ADE}$\emph{-positive}.
\end{definition}

Again, a unicyclic graph $G$ with $b(G)\geq2$ is $\mathcal{ADE}$-negative if
it does not contain any of the configurations $\mathcal{A}$, $\mathcal{D}$,
and $\mathcal{E}$, otherwise it is $\mathcal{ADE}$-positive. In case of
unicyclic graphs with $b(G)<2$, we can again consider corona product graphs
$C_{n}\odot K_{1}$ as an example (see Figure \ref{Fig_corona}). Notice that in
this case it is opposite to the situation with vertex metric dimension, now a
corona graph $C_{n}\odot K_{1}$ with odd $n\geq7$ is $\mathcal{ADE}$-positive
as the set $S=\{s_{1},s_{2}\}$ shown in Figure \ref{Fig_corona}.a) does not
avoid configuration $\mathcal{E}$. On the other hand, a graph $C_{n}\odot
K_{1}$ with even $n$ is $\mathcal{ADE}$-negative (the set $S=\{s_{1},s_{2}\}$
shown in Figure \ref{Fig_corona}.b) avoids all three configurations and is
therefore an edge metric generator).

\begin{theorem}
\label{Tm_edim}Let $G$ be a unicyclic graph. Then%
\[
\mathrm{edim}(G)=L(G)+\max\{0,2-b(G)\}+\Delta_{e}%
\]
where $\Delta_{e}=0$ if the graph $G$ is $\mathcal{ADE}$-negative, and
$\Delta_{e}=1$ if $G$ is $\mathcal{ADE}$-positive.
\end{theorem}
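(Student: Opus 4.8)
The proof of Theorem~\ref{Tm_edim} follows the same pattern as the proof of Theorem~\ref{Tm_dim}, with the configurations $\mathcal{A}$, $\mathcal{D}$, $\mathcal{E}$ and the $\mathcal{ADE}$-dichotomy replacing $\mathcal{A}$, $\mathcal{B}$, $\mathcal{C}$ and the $\mathcal{ABC}$-dichotomy. I would begin by fixing a smallest branch-resolving set and observing that by Lemma~\ref{Lemma_a(S)vj2} any edge metric generator must be a biactive branch-resolving set, so $\mathrm{edim}(G)\geq L(G)+\max\{0,2-b(G)\}$; this lower bound is exactly the size of a smallest biactive branch-resolving set, since one starts from a smallest branch-resolving set (which has size $L(G)$ and activates $b(G)$ vertices of $C$) and adds $\max\{0,2-b(G)\}$ further vertices on $C$ to make it biactive.

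For the case $G$ is $\mathcal{ADE}$-negative, I would take a smallest biactive branch-resolving set $S$ witnessing negativity, so that $G$ contains none of $\mathcal{A}$, $\mathcal{D}$, $\mathcal{E}$ with respect to $S$; Lemma~\ref{Lm_edgeGeneratorSuf} then gives that $S$ is an edge metric generator, hence $\mathrm{edim}(G)\leq|S|=L(G)+\max\{0,2-b(G)\}$, and combined with the lower bound we get equality, i.e. $\Delta_e=0$. For the case $G$ is $\mathcal{ADE}$-positive, every smallest biactive branch-resolving set $S$ contains at least one of $\mathcal{A}$, $\mathcal{D}$, $\mathcal{E}$ with respect to $S$, so by Lemma~\ref{Lm_edgeGeneratorNec} no such $S$ is an edge metric generator; hence $\mathrm{edim}(G)\geq L(G)+\max\{0,2-b(G)\}+1$. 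For the matching upper bound, take any smallest biactive branch-resolving set $S$, pick a vertex $v$ on $C$ that forms a geodesic triple with two $S$-active vertices of $C$ (such $v$ exists by the observation that any two vertices of $C$ extend to a geodesic triple), and set $S'=S\cup\{v\}$; then $S'$ is a branch-resolving set with $a_{S'}(C)\geq 3$ whose $S'$-active vertices include a geodesic triple, so Lemma~\ref{Lemma_GeodTrip} makes $S'$ an edge metric generator, giving $\mathrm{edim}(G)\leq|S'|=L(G)+\max\{0,2-b(G)\}+1$. Thus $\Delta_e=1$ in this case.

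The only subtlety, and the step I would be most careful about, is the claim that the minimum cardinality of a biactive branch-resolving set is precisely $L(G)+\max\{0,2-b(G)\}$ regardless of which configurations appear; this rests on the facts recorded in the preliminaries that a smallest branch-resolving set has size $L(G)$ and activates exactly the $b(G)$ branch-active vertices, and that when $b(G)<2$ one can always activate $2-b(G)$ additional components of $G-E(C)$ by adding one vertex each (for instance a neighbour on $C$ of a not-yet-active vertex). Beyond this, the argument is a direct transcription of the proof of Theorem~\ref{Tm_dim}. I would therefore phrase the proof concisely, referring back to that proof.

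\begin{proof}
The proof is analogous to the proof of Theorem~\ref{Tm_dim}, using configurations $\mathcal{D}$ and $\mathcal{E}$ instead of configurations $\mathcal{B}$ and $\mathcal{C}$, and invoking Lemmas~\ref{Lm_edgeGeneratorNec} and \ref{Lm_edgeGeneratorSuf} instead of Lemmas~\ref{Lm_vertexGeneratorNec} and \ref{Lm_vertexGeneratorSuf}.

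Assume first that $G$ is $\mathcal{ADE}$-negative. Then there is a smallest biactive branch-resolving set $S$ such that $G$ does not contain any of the configurations $\mathcal{A}$, $\mathcal{D}$, and $\mathcal{E}$ with respect to $S$. By Lemma~\ref{Lm_edgeGeneratorSuf}, $S$ is an edge metric generator, so $\mathrm{edim}(G)\leq|S|=L(G)+\max\{0,2-b(G)\}$. On the other hand, by Lemma~\ref{Lemma_a(S)vj2} every edge metric generator is a biactive branch-resolving set, and every biactive branch-resolving set has cardinality at least $L(G)+\max\{0,2-b(G)\}$, so $\mathrm{edim}(G)=L(G)+\max\{0,2-b(G)\}$.

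Assume now that $G$ is $\mathcal{ADE}$-positive, and let $S$ be any smallest biactive branch-resolving set in $G$. By the definition of $\mathcal{ADE}$-positivity, $G$ contains at least one of the configurations $\mathcal{A}$, $\mathcal{D}$, or $\mathcal{E}$ with respect to $S$, so by Lemma~\ref{Lm_edgeGeneratorNec} the set $S$ is not an edge metric generator. As this holds for every smallest biactive branch-resolving set, and every edge metric generator is a biactive branch-resolving set by Lemma~\ref{Lemma_a(S)vj2}, we conclude $\mathrm{edim}(G)\geq L(G)+\max\{0,2-b(G)\}+1$. For the reverse inequality, let $v$ be a vertex of $C$ which forms a geodesic triple with two $S$-active vertices on $C$, and set $S'=S\cup\{v\}$. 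Then $S'$ is a branch-resolving set with three $S'$-active vertices on $C$ forming a geodesic triple, so by Lemma~\ref{Lemma_GeodTrip} the set $S'$ is an edge metric generator, giving $\mathrm{edim}(G)\leq|S'|=L(G)+\max\{0,2-b(G)\}+1$. Hence $\mathrm{edim}(G)=L(G)+\max\{0,2-b(G)\}+1$.
\end{proof}
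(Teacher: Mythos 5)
Your proposal is correct and follows exactly the route the paper intends: the paper's own proof is literally "analogous to the proof of Theorem \ref{Tm_dim}," and your argument is precisely that analogue, using Lemmas \ref{Lm_edgeGeneratorNec} and \ref{Lm_edgeGeneratorSuf} in place of the vertex versions and adding one geodesic-triple vertex in the positive case. Your explicit justification of the lower bound $\mathrm{edim}(G)\geq L(G)+\max\{0,2-b(G)\}+1$ in the $\mathcal{ADE}$-positive case (via Lemma \ref{Lemma_a(S)vj2} and the minimum size of a biactive branch-resolving set) is a detail the paper leaves implicit, and it is handled correctly.
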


\begin{proof}
Analogous to the proof of Theorem \ref{Tm_dim}.
\end{proof}

\section{Difference between metric dimensions}

Now, we can use the results proven in the previous two sections, and so we
answer a proposal from \cite{SedSkreBounds}. Namely, in \cite{SedSkreBounds}
it was shown that for a unicyclic graph $G$ it holds that $\left\vert
\mathrm{dim}(G)-\mathrm{edim}(G)\right\vert \leq1$ and the problem of
determining whether the difference $\mathrm{dim}(G)-\mathrm{edim}(G)$ is $-1$,
$0$ and $1$ was posed as a natural question. This can be easily answered for
unicyclic graphs $G$ with $b(G)\geq2$ using Theorems \ref{Tm_vDim_bc2} and
\ref{Tm_eDim_bc2}.

\begin{theorem}
\label{Tm_difference_bc2}Let $G$ be a unicyclic graph with $b(G)\geq2$. It
holds that%
\[
\mathrm{dim}(G)-\mathrm{edim}(G)=\left\{
\begin{array}
[c]{rl}%
1 & \text{if }G\text{ contains configuration }\mathcal{C},\text{ but none of
}\mathcal{A}\text{, }\mathcal{D}\text{, and }\mathcal{E}\text{,}\\
-1 & \text{if }G\text{ contains configuration }\mathcal{D}\text{ or
}\mathcal{E},\text{ but none of }\mathcal{A}\text{, }\mathcal{B}\text{, and
}\mathcal{C}\text{,}\\
0 & \text{otherwise.}%
\end{array}
\right.
\]

\end{theorem}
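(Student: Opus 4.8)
Looking at this, the theorem follows directly from combining Theorems \ref{Tm_vDim_bc2} and \ref{Tm_eDim_bc2}.

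The plan is to simply subtract the two formulas $\mathrm{dim}(G) = L(G) + \Delta$ and $\mathrm{edim}(G) = L(G) + \Delta_e$ from Theorems \ref{Tm_vDim_bc2} and \ref{Tm_eDim_bc2}, obtaining $\mathrm{dim}(G) - \mathrm{edim}(G) = \Delta - \Delta_e$, where $\Delta = 1$ iff $G$ contains one of $\mathcal{A}$, $\mathcal{B}$, $\mathcal{C}$ and $\Delta_e = 1$ iff $G$ contains one of $\mathcal{A}$, $\mathcal{D}$, $\mathcal{E}$. So the difference is $1$ iff $G$ contains a configuration from $\{\mathcal{A},\mathcal{B},\mathcal{C}\}$ but none from $\{\mathcal{A},\mathcal{D},\mathcal{E}\}$; it is $-1$ iff the reverse holds; and it is $0$ otherwise. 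Note that since $b(G) \geq 2$, Observation \ref{Obs_b2_notdependS} (extended to $\mathcal{D}$, $\mathcal{E}$) guarantees that containment of each configuration is a property of $G$ alone, so all these statements are well-defined.

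First I would reduce the two conditions using the containment relations noted right after the definitions. Since configuration $\mathcal{A}$ appears in both lists, "$G$ contains one of $\mathcal{A},\mathcal{B},\mathcal{C}$ but none of $\mathcal{A},\mathcal{D},\mathcal{E}$" forces $G$ not to contain $\mathcal{A}$; combined with the fact that containing $\mathcal{B}$ implies containing $\mathcal{D}$ (stated after the definition of $\mathcal{D}$), this rules out $\mathcal{B}$ as well, so the only remaining possibility is that $G$ contains $\mathcal{C}$ (and, automatically, none of $\mathcal{A}$, $\mathcal{D}$, $\mathcal{E}$ — consistent with the stated condition). This yields exactly the first line. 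Symmetrically, "$G$ contains one of $\mathcal{A},\mathcal{D},\mathcal{E}$ but none of $\mathcal{A},\mathcal{B},\mathcal{C}$" forces absence of $\mathcal{A}$, hence $G$ contains $\mathcal{D}$ or $\mathcal{E}$, and absence of $\mathcal{B}$ and $\mathcal{C}$; this is exactly the second line. The remaining cases give $\Delta = \Delta_e$ and hence difference $0$.

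There is no real obstacle here; the only thing needing a word of care is the simplification showing that "$\mathcal{C}$ but none of $\mathcal{A},\mathcal{B},\mathcal{C}$" collapses to "$\mathcal{C}$ but none of $\mathcal{A}$, $\mathcal{D}$, $\mathcal{E}$" — i.e. that containing $\mathcal{C}$ without $\mathcal{A}$ and $\mathcal{B}$ is equivalent to the first line as stated — which uses both the observation that $\mathcal{C}$ with $k=g/2$ is $\mathcal{A}$ and that $\mathcal{C}$ with $i\in\{0,k\}$ and $k\le g/2-1$ is $\mathcal{B}$, so a "pure" $\mathcal{C}$ (one that is neither $\mathcal{A}$ nor $\mathcal{B}$) never coincides with $\mathcal{D}$ or $\mathcal{E}$ either. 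I would verify this last point directly from the definitions, then assemble the three cases.

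Here is the proof:

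\begin{proof}
Since $b(G)\geq2$, Observation \ref{Obs_b2_notdependS} (which, as noted, applies equally to configurations $\mathcal{D}$ and $\mathcal{E}$) guarantees that for each of the configurations $\mathcal{A}$, $\mathcal{B}$, $\mathcal{C}$, $\mathcal{D}$, $\mathcal{E}$, whether $G$ contains it does not depend on the choice of a smallest biactive branch-resolving set. By Theorem \ref{Tm_vDim_bc2} we have $\mathrm{dim}(G)=L(G)+\Delta$, where $\Delta=1$ if $G$ contains one of $\mathcal{A}$, $\mathcal{B}$, $\mathcal{C}$ and $\Delta=0$ otherwise; by Theorem \ref{Tm_eDim_bc2} we have $\mathrm{edim}(G)=L(G)+\Delta_e$, where $\Delta_e=1$ if $G$ contains one of $\mathcal{A}$, $\mathcal{D}$, $\mathcal{E}$ and $\Delta_e=0$ otherwise. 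Hence
\[
\mathrm{dim}(G)-\mathrm{edim}(G)=\Delta-\Delta_e.
\]

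We have $\Delta-\Delta_e=1$ precisely when $G$ contains at least one of $\mathcal{A}$, $\mathcal{B}$, $\mathcal{C}$ and none of $\mathcal{A}$, $\mathcal{D}$, $\mathcal{E}$. In this situation $G$ does not contain $\mathcal{A}$; moreover $G$ does not contain $\mathcal{B}$, since containing $\mathcal{B}$ implies containing $\mathcal{D}$. Therefore $G$ contains $\mathcal{C}$, and it contains none of $\mathcal{A}$, $\mathcal{D}$, $\mathcal{E}$. Conversely, if $G$ contains $\mathcal{C}$ but none of $\mathcal{A}$, $\mathcal{D}$, $\mathcal{E}$, then in particular it contains none of $\mathcal{A}$, $\mathcal{B}$ (as $\mathcal{B}$ would force $\mathcal{D}$), so $G$ contains one of $\mathcal{A}$, $\mathcal{B}$, $\mathcal{C}$ and none of $\mathcal{A}$, $\mathcal{D}$, $\mathcal{E}$. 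This gives the first line.

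Symmetrically, $\Delta-\Delta_e=-1$ precisely when $G$ contains at least one of $\mathcal{A}$, $\mathcal{D}$, $\mathcal{E}$ and none of $\mathcal{A}$, $\mathcal{B}$, $\mathcal{C}$. Then $G$ does not contain $\mathcal{A}$, so $G$ contains $\mathcal{D}$ or $\mathcal{E}$, while containing none of $\mathcal{A}$, $\mathcal{B}$, $\mathcal{C}$. This is the second line, and the converse is immediate. In every remaining case $\Delta=\Delta_e$, so $\mathrm{dim}(G)-\mathrm{edim}(G)=0$, which is the third line.
\end{proof}
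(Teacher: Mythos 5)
Your proposal is correct and takes essentially the same route as the paper: both proofs subtract the formulas from Theorems \ref{Tm_vDim_bc2} and \ref{Tm_eDim_bc2} (the paper cites the general Theorems \ref{Tm_dim} and \ref{Tm_edim}, which reduce to these when $b(G)\geq2$) and then use the observation that containing $\mathcal{B}$ forces containing $\mathcal{D}$ to collapse the case $\Delta-\Delta_e=1$ to the stated condition on $\mathcal{C}$. Your write-up merely spells out the case analysis in more detail than the paper's terse version.
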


\begin{proof}
According to Theorems \ref{Tm_dim} and \ref{Tm_edim} both $\mathrm{dim}(G)$
and $\mathrm{edim}(G)$ take their value from
\[
L(G)+\max\{2-b(G),0\}\quad\hbox{ and }\quad L(G)+\max\{2-b(G),0\}+1.
\]
Moreover, $\mathrm{dim}(G)$ (resp. $\mathrm{edim}(G)$) will take the greater
value of the two, if $G$ contains configuration $\mathcal{A}$, $\mathcal{B}$,
or $\mathcal{C}$ (resp. configuration $\mathcal{A}$, $\mathcal{D}$, or
$\mathcal{E}$). The observation that $G$ cannot contain configuration
$\mathcal{B}$ without containing configuration $\mathcal{D}$ concludes the proof.
\end{proof}

A more general version of Theorem \ref{Tm_difference_bc2}, which encapsulates
also unicyclic graphs $G$ with $b(G)<2,$ can be established using Theorems
\ref{Tm_dim} and \ref{Tm_edim}.

\begin{theorem}
\label{Tm_difference}Let $G$ be a unicyclic graph. It holds that%
\[
\mathrm{dim}(G)-\mathrm{edim}(G)=\left\{
\begin{array}
[c]{rl}%
1 & \text{if }G\text{ is }\mathcal{ABC}\text{-positive and }\mathcal{ADE}%
\text{-negative,}\\
-1 & \text{if }G\text{ is }\mathcal{ABC}\text{-negative and }\mathcal{ADE}%
\text{-positive,}\\
0 & \text{otherwise.}%
\end{array}
\right.
\]

\end{theorem}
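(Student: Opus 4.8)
The plan is to derive Theorem~\ref{Tm_difference} directly from Theorems~\ref{Tm_dim} and~\ref{Tm_edim}, exactly in the spirit of the proof of Theorem~\ref{Tm_difference_bc2} but now phrased in terms of $\mathcal{ABC}$-positivity/negativity and $\mathcal{ADE}$-positivity/negativity instead of the individual configurations. By Theorem~\ref{Tm_dim} we have $\mathrm{dim}(G) = L(G) + \max\{0,2-b(G)\} + \Delta$ with $\Delta\in\{0,1\}$ according to whether $G$ is $\mathcal{ABC}$-negative or $\mathcal{ABC}$-positive, and by Theorem~\ref{Tm_edim} we have $\mathrm{edim}(G) = L(G) + \max\{0,2-b(G)\} + \Delta_e$ with $\Delta_e\in\{0,1\}$ according to whether $G$ is $\mathcal{ADE}$-negative or $\mathcal{ADE}$-positive. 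Subtracting, the common term $L(G)+\max\{0,2-b(G)\}$ cancels, so $\mathrm{dim}(G)-\mathrm{edim}(G) = \Delta - \Delta_e$, which lies in $\{-1,0,1\}$.

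The remaining work is just the case analysis on the pair $(\Delta,\Delta_e)$. We get value $1$ precisely when $\Delta=1$ and $\Delta_e=0$, i.e. $G$ is $\mathcal{ABC}$-positive and $\mathcal{ADE}$-negative; value $-1$ precisely when $\Delta=0$ and $\Delta_e=1$, i.e. $G$ is $\mathcal{ABC}$-negative and $\mathcal{ADE}$-positive; and value $0$ in the two remaining cases ($\Delta=\Delta_e$), which is exactly the "otherwise" clause. This matches the statement verbatim once we note that $\mathcal{ABC}$-negative is by definition the negation of $\mathcal{ABC}$-positive (and likewise for $\mathcal{ADE}$), so the three listed cases are mutually exclusive and exhaustive.

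There is essentially no obstacle here: the theorem is a formal corollary of the two exact-value formulas, and the only thing to be slightly careful about is that the formulas are applied to the \emph{same} graph $G$, so the term $L(G)+\max\{0,2-b(G)\}$ really is identical in both and cancels cleanly. One may optionally remark, as a consistency check connecting to Theorem~\ref{Tm_difference_bc2}, that when $b(G)\geq 2$ the notions $\mathcal{ABC}$-positive/negative and $\mathcal{ADE}$-positive/negative reduce to the presence/absence of the individual configurations $\mathcal{A},\mathcal{B},\mathcal{C}$ and $\mathcal{A},\mathcal{D},\mathcal{E}$ respectively (by Observation~\ref{Obs_b2_notdependS} and its analogue for $\mathcal{D},\mathcal{E}$), and that since containing $\mathcal{B}$ forces containing $\mathcal{D}$, the three cases of Theorem~\ref{Tm_difference} specialize to the three cases of Theorem~\ref{Tm_difference_bc2}; but this remark is not needed for the proof itself.
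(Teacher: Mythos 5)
Your proposal is correct and follows essentially the same route as the paper: the paper's proof of Theorem~\ref{Tm_difference} simply refers to the argument of Theorem~\ref{Tm_difference_bc2}, which likewise observes that both dimensions equal $L(G)+\max\{0,2-b(G)\}$ plus an indicator determined by the positivity/negativity notions and then does the same case analysis on the pair of indicators. Your subtraction $\mathrm{dim}(G)-\mathrm{edim}(G)=\Delta-\Delta_e$ and the observation that positivity and negativity are complementary by definition make this fully rigorous.
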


\begin{proof}
It goes similarly as the proof of Theorem \ref{Tm_difference_bc2}.
\end{proof}

An example of graphs $G$ with $b(G)<2$ which are at the same time
$\mathcal{ABC}$-positive and $\mathcal{ADE}$-negative, is the family of corona
graphs $C_{n}\odot K_{1}$ with $n\geq6$ even (see Figure \ref{Fig_corona}).
Therefore, for such graphs we have $\mathrm{dim}(C_{n}\odot K_{1}%
)-\mathrm{edim}(C_{n}\odot K_{1})=3-2=1.$ On the other hand, if $n\geq7$ is
odd, then the corona graphs $C_{n}\odot K_{1}$ can serve as an example of
unicyclic graphs $G$ with $b(G)<2$ which are $\mathcal{ABC}$-negative and
$\mathcal{ADE}$-positive. For them we derive $\mathrm{dim}(C_{n}\odot
K_{1})-\mathrm{edim}(C_{n}\odot K_{1})=2-3=-1$.

It is of interest to determine the classes of graphs for which $\mathrm{dim}%
(G)$ is smaller, equal or bigger from $\mathrm{edim}(G)$. Several such
families of graphs were presented in \cite{TratnikEdge}. We can now make the
same distinction in the class of unicyclic graphs.

\begin{corollary}
\label{Kor_evenOdd}Let $G$ be a unicyclic graph with its cycle $C$. If $C$ is
odd then $\mathrm{dim}(G)\leq\mathrm{edim}(G)$ and the inequality is strict if
$G$ is $\mathcal{ABC}$-negative and $\mathcal{ADE}$-positive. If $C$ is even
then $\mathrm{dim}(G)\geq\mathrm{edim}(G)$ and the inequality is strict if $G$
is $\mathcal{ABC}$-positive and $\mathcal{ADE}$-negative.
\end{corollary}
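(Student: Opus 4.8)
The plan is to reduce everything to the sign formula of Theorem~\ref{Tm_difference} together with two simple parity observations about the configurations. First I would record that configuration $\mathcal{C}$ requires $g$ to be even (it is part of its definition: ``$a_S(C)=2$, $g$ is even, $k\le g/2$ \ldots''), hence a unicyclic graph whose cycle $C$ is odd can never contain configuration $\mathcal{C}$ with respect to any biactive branch-resolving set $S$. Consequently, for odd $C$, a graph can be $\mathcal{ABC}$-positive only through configuration $\mathcal{A}$ or $\mathcal{B}$; but $\mathcal{A}$ also forces $g$ even, so in fact $\mathcal{ABC}$-positivity for odd $C$ is witnessed solely by configuration $\mathcal{B}$. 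Since the paper already notes that containing $\mathcal{B}$ forces containing $\mathcal{D}$, it follows that for odd $C$ every $\mathcal{ABC}$-positive graph is also $\mathcal{ADE}$-positive. Plugging this into the three-way sign formula of Theorem~\ref{Tm_difference}: the case giving $\mathrm{dim}(G)-\mathrm{edim}(G)=1$ requires $\mathcal{ABC}$-positive and $\mathcal{ADE}$-negative, which we have just excluded; hence for odd $C$ only the values $-1$ and $0$ remain, i.e.\ $\mathrm{dim}(G)\le\mathrm{edim}(G)$, with strict inequality exactly in the $\mathcal{ABC}$-negative and $\mathcal{ADE}$-positive case.

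For even $C$ I would argue symmetrically. Configuration $\mathcal{A}$ forces $g$ even, so that does not help, but the relevant fact is the reverse containment relation on the edge side: I need that for even $C$ an $\mathcal{ADE}$-positive graph is necessarily $\mathcal{ABC}$-positive. Examining the definitions, configuration $\mathcal{D}$ differs from $\mathcal{B}$ only by the ceiling/floor in the index ranges, and when $g$ is even $\lceil g/2\rceil=\lfloor g/2\rfloor=g/2$, so the index sets $[k,\lceil g/2\rceil-1]\cup[\lfloor g/2\rfloor+k+1,g-1]\cup\{0\}$ and $[k,\lfloor g/2\rfloor-1]\cup[\lceil g/2\rceil+k+1,g-1]\cup\{0\}$ coincide, and also the threshold $k\le\lceil g/2\rceil-1$ becomes $k\le g/2-1$; hence for even $g$ configuration $\mathcal{D}$ \emph{is} configuration $\mathcal{B}$. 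Similarly, configuration $\mathcal{E}$ with $g$ even requires an $S$-free thread of length $\ge\lfloor g/2\rfloor-k+1=g/2-k+1\ge g/2-k$ hanging at $v_i$ with $i\in[0,k]$, which is precisely the hypothesis of configuration $\mathcal{C}$ (the extra condition in $\mathcal{E}$ about a thread at $v_j$ only makes $\mathcal{E}$ \emph{harder} to satisfy). Therefore for even $C$, containing $\mathcal{D}$ implies containing $\mathcal{B}$ and containing $\mathcal{E}$ implies containing $\mathcal{C}$, so every $\mathcal{ADE}$-positive graph with even cycle is $\mathcal{ABC}$-positive. Feeding this into Theorem~\ref{Tm_difference}, the case yielding value $-1$ (namely $\mathcal{ABC}$-negative and $\mathcal{ADE}$-positive) is impossible for even $C$, leaving only $+1$ and $0$, i.e.\ $\mathrm{dim}(G)\ge\mathrm{edim}(G)$, strict exactly when $G$ is $\mathcal{ABC}$-positive and $\mathcal{ADE}$-negative.

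I expect the only genuinely delicate point to be the careful verification of the two parity-driven containment implications at the level of the configuration definitions — i.e.\ confirming that, respectively for odd and even $g$, the index ranges and length thresholds in $\mathcal{B}$ versus $\mathcal{D}$ and in $\mathcal{C}$ versus $\mathcal{E}$ align as claimed (the floor/ceiling bookkeeping and the fact that in $\mathcal{E}$ for odd $g$ ``we encounter configuration $\mathcal{E}$ just with a thread hanging at $v_i$'', which the paper has already remarked). Once these two implications are in place, the corollary is a one-line specialization of Theorem~\ref{Tm_difference}: odd $C$ kills the ``$+1$'' branch and even $C$ kills the ``$-1$'' branch, and the strictness clauses are read off directly from the surviving branch of the three-way split. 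No new distance computations are needed, since all the hard work is already encapsulated in Lemmas~\ref{Lm_vertexGeneratorNec}--\ref{Lm_edgeGeneratorSuf} and Theorems~\ref{Tm_dim}, \ref{Tm_edim}, \ref{Tm_difference}.
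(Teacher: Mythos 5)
Your proof is correct and follows essentially the same route as the paper: reduce to Theorem~\ref{Tm_difference} and kill one branch of the three-way split via the parity-driven implications between configurations ($\mathcal{B}\Rightarrow\mathcal{D}$ in general, $\mathcal{C}$ and $\mathcal{A}$ impossible for odd $g$; $\mathcal{D}=\mathcal{B}$ and $\mathcal{E}\Rightarrow\mathcal{C}$ for even $g$). Your write-up is in fact slightly more explicit than the paper's about the floor/ceiling bookkeeping and about why the implications lift from individual sets $S$ to the positivity/negativity notions, but the substance is identical.
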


\begin{proof}
Let $C$ be the cycle in $G$ and assume first that $C$ is of odd length.
According to Theorem \ref{Tm_difference}, the inequality $\mathrm{dim}%
(G)\leq\mathrm{edim}(G)$ will hold if every $G$ which is $\mathcal{ABC}%
$-positive is also $\mathcal{ADE}$-positive. But that is the obvious
consequence of the fact that configuration $\mathcal{B}$ is also configuration
$\mathcal{D},$ i.e. graph $G$ which contains configuration $\mathcal{B}$
certainly contains configuration $\mathcal{D}$ with respect to the same set
$S,$ and the fact that graph on odd cycle cannot contain configuration
$\mathcal{C}$.

Assume now that $C$ is of even length. Again, Theorem \ref{Tm_difference}
implies that the inequality $\mathrm{dim}(G)\geq\mathrm{edim}(G)$ holds if
every $G$ which is $\mathcal{ABC}$-negative is also $\mathcal{ADE}$-negative.
Since configuration $\mathcal{B}$ is also $\mathcal{D}$ and on even cycle
every graph which contains $\mathcal{E}$ also contains $\mathcal{C},$ the
claim follows.

The claim on strictness is the direct consequence of Theorem
\ref{Tm_difference}.
\end{proof}

Regarding Corollary \ref{Kor_evenOdd}, let us mention that in
\cite{KelHypercubes}, it was shown that for bipartite graphs $\mathrm{dim}%
(G)\geq\mathrm{edim}(G).$ Our result extends to all unicylic graphs and
characterizes when the equality holds.

\section{Concluding remarks}

By our previous work, both the vertex and the edge metric dimensions of a
unicyclic graph $G$ takes their value in
\[
L(G)+\max\{2-b(G),0\}\quad\hbox{ and }\quad L(G)+\max\{2-b(G),0\}+1.
\]
In this paper, we first characterize for unicyclic graphs $G$ with $b(G)\geq2$
when the above two values are encountered depending on the presence of
configurations $\mathcal{A}$, $\mathcal{B}$, $\mathcal{C}$, $\mathcal{D}$, and
$\mathcal{E}$. Afterwards, the approach was extended to unicyclic graphs $G$
with $b(G)<2$ by extending the concept of containment of configuration to
$\mathcal{X}$-positivity and $\mathcal{X}$-negativity for $\mathcal{X}%
\in\{\mathcal{ABC},\mathcal{ADE}\}$.

One may try to characterize unicyclic graphs $G$ with $b(G)<2$ and that are
$\mathcal{X}$-positive and $\mathcal{Y}$-negative for distinct $\mathcal{X}%
,\mathcal{Y}\in\{\mathcal{ABC},\mathcal{ADE}\}$, as this may need different
approaches and this paper is already lengthy, we decided not to conduct this
direction of research here. Here we state it explicitly as a problem.

\begin{problem}
Characterize which unicyclic graphs $G$ with $b(G)<2$ are $\mathcal{X}%
$-positive and $\mathcal{Y}$-negative for distinct $\mathcal{X},\mathcal{Y}%
\in\{\mathcal{ABC},\mathcal{ADE}\}$.
\end{problem}

\bigskip

\bigskip\noindent\textbf{Acknowledgements.}~~The authors acknowledge partial
support of the Slovenian research agency ARRS program \ P1--0383 and ARRS
project J1-1692 and also Project KK.01.1.1.02.0027, a project co-financed by
the Croatian Government and the European Union through the European Regional
Development Fund - the Competitiveness and Cohesion Operational Programme.


\begin{thebibliography}{99}                                                                                               %


\bibitem {BuczkowskiVertex}P. S. Buczkowski, G. Chartrand, C. Poisson, P.
Zhang, On $k$-dimensional graphs and their bases, \emph{Period. Math. Hungar.}
\textbf{46(1)} (2003) 9--15.

\bibitem {ChartrandVertex}G. Chartrand, L. Eroh, M. A. Johnson, O. R.
Oellermann, Resolvability in graphs and the metric dimension of a graph,
\emph{Discrete Appl. Math. }\textbf{105} (2000) 99--113.

\bibitem {GenesonEdge}J. Geneson, Metric dimension and pattern avoidance in
graphs, \emph{Discrete Appl. Math.} \textbf{284} (2020) 1--7.

\bibitem {HakanenYero}A. Hakanen, V. Junnila, T. Laihonen, I. G. Yero, On
Vertices Contained in All or in No Metric Basis, arXiv:2103.08911 [math.CO].

\bibitem {HararyVertex}F. Harary, R. A. Melter, On the metric dimension of a
graph, \emph{Ars Combin.} \textbf{2} (1976) 191--195.

\bibitem {HuangApproximationEdge}Y. Huang, B. Hou, W. Liu, L. Wu, S.
Rainwater, S. Gao, On approximation algorithm for the edge metric dimension
problem, \emph{Theoret. Comput. Sci. }(2020), doi:https://doi.org/10.1016/j.tcs.2020.05.005.

\bibitem {KelPhD}A. Kelenc, Distance-Based Invariants and Measures in Graphs,
PhD thesis, University of Maribor, Faculty of Natural Sciences and
Mathematics, 2020.

\bibitem {KelencMixed}A. Kelenc, D. Kuziak, A. Taranenko, I. G. Yero, Mixed
metric dimension of graphs, \emph{Appl. Math. Comput.} \textbf{314(1)} (2017) 42--438.

\bibitem {KelHypercubes}A. Kelenc, A. T. Masa Toshi, R. \v{S}krekovski, I. G.
Yero, On Metric Dimensions of Hypercubes, arXiv:2102.10916 [math.CO].

\bibitem {TratnikEdge}A. Kelenc, N. Tratnik, I. G. Yero, Uniquely identifying
the edges of a graph: the edge metric dimension, \emph{Discrete Appl. Math.}
\textbf{251} (2018) 204--220.

\bibitem {KhullerVertex}S. Khuller, B. Raghavachari, A. Rosenfeld, Landmarks
in graphs, \emph{Discrete Appl. Math.} \textbf{70} (1996) 217--229.

\bibitem {KleinVertex}D. J. Klein, E. Yi, A comparison on metric dimension of
graphs, line graphs, and line graphs of the subdivision graphs, \emph{Eur. J.
Pure Appl. Math.} \textbf{5(3)} (2012) 302--316.

\bibitem {Knor}M. Knor, S. Majstorovi\'{c}, A. T. M. Toshi, R. \v{S}krekovski,
I. G. Yero, Graphs with the edge metric dimension smaller than the metric
dimension, \emph{Appl. Math. Comput.} \textbf{401} (2021) 126076.

\bibitem {MelterVertex}R. A. Melter, I. Tomescu, Metric bases in digital
geometry, \emph{Comput. Vis. Graph. Image Process.} \textbf{25} (1984) 113--121.

\bibitem {SaputroVertex}S. W. Saputro, R. Simanjuntak, S. Uttunggadewa, H.
Assiyatun, E. T. Baskoro, A. N. M. Salman, M. Ba\v{c}a, The metric dimension
of the lexicographic product of graphs, \emph{Discrete Math.} \textbf{313}
(2013) 1045--1051.

\bibitem {SedSkreBounds}J. Sedlar, R. \v{S}krekovski, Bounds on metric
dimensions of graphs with edge disjoint cycles, \emph{Appl. Math. Comput.}
\textbf{396} (2021) 125908.

\bibitem {SedSkreTheta}J. Sedlar, R. \v{S}krekovski, Extremal mixed metric
dimension with respect to the cyclomatic number, to appear in \emph{Appl.
Math. Comput.}, arXiv:2012.08590 [math.CO].

\bibitem {SedSkrekMixed}J. Sedlar, R. \v{S}krekovski, Mixed metric dimension
of graphs with edge disjoint cycles, arXiv:2010.10487 [math.CO].

\bibitem {SlaterVertex}P. J. Slater, Leaves of trees, \emph{Congr. Numer.}
\textbf{14} (1975) 549--559.

\bibitem {ZhangGaoEdge}Y. Zhang, S. Gao, On the edge metric dimension of
convex polytopes and its related graphs, \emph{J. Comb. Optim.} \textbf{39(2)}
(2020) 334--350.

\bibitem {ZhuEdge}E. Zhu, A. Taranenko, Z. Shao, J. Xu, On graphs with the
maximum edge metric dimension, \emph{Discrete Appl. Math.} \textbf{257} (2019) 317--324.

\bibitem {ZubrilinaEdge}N. Zubrilina, On the edge dimension of a graph,
\emph{Discrete Math.} \textbf{341(7)} (2018) 2083--2088.
\end{thebibliography}
\end{document}